\begin{document}

\title*{Identity theorem for pro-$p$-groups.}
\titlerunning{Identity theorem for pro-$p$-groups}
\author{A.\,M.~Mikhovich}

\institute{Lomonosov Moscow State University, \email{mikhandr@mail.ru}}

\maketitle

\abstract{The concept of schematization consists in replacing simplicial groups by simplicial affine group schemes. In the case when the coefficient field has a zero characteristic, there is a prominent theory of simplicial prounipotent groups,
the origins of which lead to the rational homotopy theory of D. Quillen. The specificity of a prime finite field $ \mathbb{F}_p $ is that the Zariski topology on the $n$-dimensional affine space $\mathbb{F}_p^n $ turns out to be discrete. Therefore, although finite $ p $-groups are $ \mathbb{F}_p $-points of $ \mathbb{F}_p $-unipotent affine group schemes, this observation is not actually used. Nevertheless, schematization reveals the profound properties of $\mathbb{F}_p$-prounipotent groups, especially in connection with prounipotent groups in the zero characteristic and in the study of quasirationality. In this paper, using results on representations and cohomology of prounipotent groups in characteristic 0, we prove an analogue of Lyndon Identity theorem for one-relator pro-p-groups (question posed by J.P. Serre) and demonstrate the application to one more problem of J.-P. Serre concerning one-relator pro-$p$-groups of cohomological dimension 2.
Schematic approach makes it possible to consider the problems of pro-$ p $-groups theory through the prism of Tannaka duality, concentrating on the category of representations. In particular we attach special importance to the existence of identities in free pro-$p$-groups (``conjurings'').}

\section{Introduction} \label{s1}
Part (1.1) of the introduction contains a brief review of a modern paradigm (as it seen by the author) of the pro-$p$-group theory and an explanation of the importance of one-relator pro-$p$-groups. In (1.2) we remind basic definitions of prounipotent group theory, those we need in the sequel. Section (1.3) provides condensed introduction to the results on quasirational presentations and their schematization. We also include the proof of quasirationality for presentations of one-relator pro-$p$-groups. Section (1.4) will help to understand the motivations for our main result (Theorem \ref{t02}), elucidating why one should consider it as an analog of Lyndon Identity theorem. We explain the importance of Tannakian philosophy in (1.5) presenting the construction of ``conjurings''.

\subsection{Pro-$p$-groups with a single defining relation}\label{s1.1}
By a pro-$p$-group one calls a group isomorphic to a projective limit of finite $p$-groups.
This is a topological group (with the topology of projective limit)
which is compact and totally disconnected.
For such groups one has a presentation theory similar in many aspects to the combinatorial
theory of discrete groups \cite{Koch}, \cite{ZR}.

Let us say that a pro-$p$-group $G$ is defined by a finite type pro-$p$-presentation if $G$
is included into an exact sequence
\begin{equation}
1 \rightarrow R\rightarrow F \xrightarrow{\pi} G \rightarrow 1 \label{eq1}
\end{equation}
in which $F=F(X)$ is the free pro-$p$-group with a finite
set $X$ of generators, and $R$ is a closed normal subgroup topologically generated by a finite set $Y$
of elements in $F$, contained in the Frattini subgroup of $F$ (\cite{Koch}, \cite{ZR}).

If $A$ is a (profinite) ring, then denote by $AG$ the (completed) group algebra of the (pro-$p$) group $G$.
By the completed group algebra we understand the topological algebra $AG=\varprojlim AG_{\mu}$ \cite{ZR},
where $A=\varprojlim A_{\alpha}$ is a profinite ring ($A_{\alpha}$ are finite rings), and
$G=\varprojlim G_{\mu}$ is a decomposition of the pro-$p$-group $G$ into a projective limit of finite
$p$-groups $G_{\mu}$.

The interest to pro-$p$-groups in the recent years is related, first of all, to problems
which arose in a joint area of noncommutative geometry, topology, analysis, and group theory.
They play an important role in papers on the problems of Kadison--Kaplansky \cite{BV},
Atiyah \cite{LS}, and Baum--Connes \cite{S}.
Let us mention the concept of \emph{cohomological p-completeness} and a program (following from these problems) of studing pro-$p$-groups
whose discrete and continuous cohomologies with coefficients in $\mathbb{F}_p$ are isomorphic \cite[Chapter 5]{AF},\cite{FerKazRem07}.

Complete group rings of pro-$p$-groups are complete $\mathbb{F}_p$--Hopf algebras \cite{Qui5},
and pro-$p$-groups themselves are analogs of Malcev groups in positive characteristics,
hence the theory of presentations of pro-$p$-groups can be considered as two-dimensional $p$-adic
homotopy theory. By $p$-adic homotopy theory we mean the analog of Quillen's
rational homotopy theory in positive characteristics.
Despite the papers published already (for instance, \cite{Man,Lur}), such theory
remains mostly conjectural,
hence the potential of combinatorial pro-$p$-group theory is firstly in that we can
check rightness of new concepts in application to solving open problems (in particular listed above).

The origins of cohomological and combinatorial theory of pro-$p$-groups lie in the early
papers of J.-P. Serre and J. Tate, and they took the modern form in the monograph \cite{Se4}.
Some important results in discrete group theory arose as analogs of similar statements
on pro-$p$- groups \cite{Lub2}. For example, the celebrated Stallings theorem,
stating that a discrete group is free if and only if its cohomological dimension
equals one, arose from the analogy, proposed by J.-P. Serre,
with the known result from pro-$p$-group theory \cite[Corollary 2, p. 30]{Se4}.
Nevertheless, after first bright successes of the theory such as the Shafarevich
theorem on existence of infinite tower of class fields \cite[I.4.4, Theorem 2]{Se4}
(proof uses the Golod--Shafarevich inequality \cite[I.4.4, Theorem 1]{Se4})
and the Demushkin--Labute classification of Galois groups of maximal $p$-extensions of
$p$-adic fields in terms of generators and relations \cite[II.5.6]{Se4}, \cite{Se},
it became clear that the study of pro-$p$-groups given combinatorially sometimes
leads to more complicated structures than in the case of similar discrete presentations.
Thus, the first nontrivial question of J.-P. Serre on the structure of relation modules
of pro-$p$-groups with one relation, stated at the Bourbaki seminar 1962/63 \cite[10.2]{Se},
still waits for a final answer (we expect a counterexample).

Understanding pro-$p$-groups with one relation $r\in R\subseteq F^p[F,F]$ plays an essential role.
Actually, suppose $G_r=F/(r)_F$ be a pro-$p$-group with one relation (in the notations \eqref{eq1})
without torsion but with cohomological dimension greater than two.
Then $\overline{\frac{\partial r}{\partial x_i}}\in \mathbb{Z}_pG$, the images of the Fox partial derivatives
$\frac{\partial r}{\partial x_i}\in\mathbb{Z}_pF$ with respect to the homomorphism of completed group
rings $\mathbb{Z}_pF\twoheadrightarrow\mathbb{Z}_pG$ induced by the homomorphism $\pi$ from \eqref{eq1},
are divisors of 0 in the completed group ring $\mathbb{Z}_pG$ of the torsion free pro-$p$-group.
To see this, consider the Crowell--Lyndon sequence \cite[Theorem 2.2]{I} of $\mathbb{Z}_pG$- modules takes the form
$$0\rightarrow \pi_2\rightarrow\mathbb{Z}_pG\xrightarrow{\psi} \mathbb{Z}_pG^{|X|}\rightarrow IG\rightarrow 0,$$
where $IG$ is the augmentation ideal in $\mathbb{Z}_pG,$ and $\pi_2=ker\psi$. Here $\psi$ is defined by the rule
$$\psi(\alpha)=(\alpha\overline{\frac{\partial r}{\partial x_1}},..,\alpha\overline{\frac{\partial r}{\partial x_i}},..),
i=1..\mid X \mid=dim_{\mathbb{F}_p}H^1(G,\mathbb{F}_p).$$
By the Koch theorem \cite[Proposition 7.7]{Koch}, cohomological dimension of a pro-$p$- group $G$
equals 2 if and only if $\pi_2=0$. Hence the assumption $cd(G)>2$ is equivalent to the statement that
for all $i\in I,$ where $\mid I \mid=dim_{\mathbb{F}_p}H^1(G,\mathbb{F}_p)$, the images
of Fox partial derivatives $\overline{\frac{\partial r}{\partial x_i}}\in \mathbb{Z}_pG$
are divisors of 0 of nontrivial elements in $\mathbb{Z}_pG$.
We must note that $\mathbb{Z}_pG$ is not in general embedded into the group von Neumann algebra
$\mathcal{N}(G)$ 
and hence one needs additional arguments in favor of the statement that such conjectural pro-$p$-groups
are related to the problems listed above.
\newpage

\subsection{Prounipotent groups} \label{s1.2}
By an affine group scheme over a field $k$ one calls a representable $k$-functor $G$
from the category $Alg_k$ of commutative $k$-algebras with unit to the category of groups.
If $G$ is representable by the algebra $\mathcal{O}(G)$, then as a functor $G$ is given, for any commutative
$k$-algebra $A$, by the formula
$$G(A) = Hom_{Alg_k}(\mathcal{O}(G),A).$$
Of course, we assume that the considered homomorphisms $Hom_{Alg_k}$ take the unit of the algebra $\mathcal{O}(G)$
to the unit of the $k$-algebra $A$. The algebra $\mathcal{O}(G)$ representing the functor $G$
is usually called the \emph{algebra of regular functions} of $G$.
The Yoneda lemma implies the anti-equivalence of the categories of affine group schemes and commutative
Hopf algebras \cite[1.3]{Wat}.
Let us say that an affine group scheme $G$ is \emph{algebraic} if its Hopf algebra of regular functions $\mathcal{O}(G)$
is finitely generated as the commutative $k$-algebra.


\begin{definition} \label{d5} By a unipotent group one calls an affine algebraic group scheme $G$
whose Hopf algebra of regular functions $\mathcal{O}(G)$ is conilpotent
(or coconnected, for equivalent definitions see \cite[8.3]{Wat}, \cite[Proposition 16]{Vez}).
An affine group scheme $G=\varprojlim G_{\alpha},$ where $G_{\alpha}$ are affine algebraic group schemes
over a field $k$, is called a prounipotent group if each $G_{\alpha}$ is a unipotent group.
\end{definition}

There is also the well known correspondence between unipotent groups over a field $k$ of characteristics 0
and nilpotent Lie algebras over $k$, which assigns to a unipotent group its Lie algebra.
This correspondence is easily extended to the correspondence between prounipotent groups over $k$
and pronilpotent Lie algebras over $k$ \cite[Appendix A.3]{Qui5}.
Functoriality of the correspondence enables one, when it is convenient, to interpret
problems on unipotent groups in the language of Lie algebras. For example, the image of a closed subgroup
under a homomorphism of prounipotent groups will be always a closed subgroup.
The main theorems on the structure of normal series, nontriviality of the center of a unipotent group \cite[VII,17]{Ham}
are transferred from the corresponding statements for Lie algebras \cite[Part 1, Chapter V, \S 3]{Se2}.
By the Quillen theorem \cite[A.3, Theorem 3.3]{Qui5}, reconstruction of the algebra of regular functions
$\mathcal{O}(G)$ of a prounipotent group $G$ from the group of $k$-points $G(k)$
is made through the dual algebra by the formula
$\mathcal{O}(G)^*\cong\widehat{k}G(k),$
where $\mathcal{O}(G)^*=Hom_k(\mathcal{O}(G),k)$ and $\widehat{k}G(k)$ is the group algebra completed with respect to the augmentation ideal. Recall also that $G(k)\cong \mathcal{G}\mathcal{O}(G)^*$ \cite[Proposition 18]{Vez}, where $\mathcal{G}$ is the functor of group-like elements in CHA.
By the Campbell--Hausdorff formula \cite[Part 1, Chapter IV, \S 7]{Se2}, \cite[A.1]{Qui5} we have
$G(k)=exp\mathcal{PO}(G)^*,  \mathcal{PO}(G)^*=log\mathcal{G}\mathcal{O}(G)^*,$ where $\mathcal{P}$ is the functor of primitive elements.

Let $A$ be a Hopf algebra over a field $k$ of characteristics 0, in which: 1) the product is commutative;
2) the coproduct is conilpotent. Then, as an algebra, $A$ is isomorphic to a free commutative algebra
\cite[Theorem 3.9.1]{Car}. Thus, $A$ is the algebra of functions on an affine space,
and we can use results from the theory of linear algebraic groups in characteristics 0.

As in \cite[2]{HM2003}, let us call by the Zariski closure of a subset $S\subseteq G(k)$
the least affine subgroup $H$ in $G$ such that $S\subseteq H(k)$ is the projective limit
$\varprojlim H_{\alpha},$ where we have denoted by $H_{\alpha}$ the closure of the image of $S$ in $G_{\alpha}(k)$.

Pro-$p$-groups are $\mathbb{F}_p$-points of prounipotent affine group schemes defined over the field $\mathbb{F}_p$.
Indeed, consider the complete group algebra $\mathbb{F}_pG=\varprojlim \mathbb{F}_p[G_{\alpha}]$
of a pro-$p$-group $G=\varprojlim G_{\alpha},$ where $G_{\alpha}$ are finite $p$-groups.
Each group algebra $\mathbb{F}_p[G_{\alpha}]$ is obviously a cocommutative Hopf algebra over the field
$\mathbb{F}_p$. Then the dual Hopf algebra $\mathbb{F}_p[G_{\alpha}]^*$ \cite[3]{Mikh2016}
is a finitely generated commutative Hopf algebra, and therefore it defines certain affine
algebraic group scheme. Let $\mathcal{G}$ be the functor of group like elements of a Hopf algebra.
Note that \cite[Proposition 18]{Vez}
$$G_{\alpha}=\mathcal{G}\mathbb{F}_p[G_{\alpha}]\cong Hom_{Alg_{\mathbb{F}_p}}(\mathbb{F}_p[G_{\alpha}]^*,\mathbb{F}_p),$$
where $Hom_{Alg_{\mathbb{F}_p}}(\mathbb{F}_p[G_{\alpha}]^*,\_)$ is the functor from the category
of commutative $\mathbb{F}_p$-algebras with unit to the category of sets which assigns to each
commutative $\mathbb{F}_p$- algebra $A$ with unit the set $Hom_{Alg_{\mathbb{F}_p}}(\mathbb{F}_p[G_{\alpha}]^*,A)$
of homomorphisms $\phi:\mathbb{F}_p[G_{\alpha}]^*\rightarrow A$ of commutative $\mathbb{F}_p$-algebras with unit.
But $$G\cong\mathcal{G}\mathbb{F}_pG\cong \varprojlim\mathcal{G}\mathbb{F}_p[G_{\alpha}]\cong
\varprojlim Hom_{Alg_{\mathbb{F}_p}}(\mathbb{F}_p[G_{\alpha}]^*,\mathbb{F}_p)\cong
Hom_{Alg_{\mathbb{F}_p}}(\mathbb{F}_p[G]^*,\mathbb{F}_p).$$

It remains to note, that the Kolchin theorem \cite[Theorem 8.2]{Wat} (since the action of a finite $p$-group on a $\mathbb{F}_p$-vector space of a finite dimension always has a fixed point) implies unipotency.

\subsection{$QR$-presentations and their schematization} \label{s1.3}

For discrete groups, $p\geq2$ will run over all primes, and for pro-$p$-groups $p$ is fixed. Let $G$ be a
(pro-$p$)group with a finite type (pro-$p$)presentation \eqref{eq1},
$\overline{R}=R/[R,R]$ be the corresponding \emph{relation module}, where $[R,R]$ is the commutant,
and the action of $G$ is induced by conjugation of $F$ on $R$. For each prime number $p\geq2$
denote by ${\Delta}_p$ the augmentation ideal of the ring $\mathbb{F}_pG.$ In the pro-$p$-case,
by ${\Delta}^n$ we understand the closure of the module generated by $n$-th powers of elements
from $\Delta={\Delta}_p$, and in the discrete case it is the $n$-th power of the ideal ${\Delta}_p$
\cite{Pas}. The properties of this filtration in the pro-$p$-case are exposed in \cite[7.4]{Koch}; in the discrete
case, the properties of the Zassenhaus filtration are similar \cite[Ch. 11]{Pas}, the difference is
in the use of the usual group ring instead of the completed one.

Denote by $\mathcal{M}_n, n\in \mathbb{N}$ its Zassenhaus $p$-filtration in $F$ with coefficients in the field
$\mathbb{F}_p$, defined by the rule
$\mathcal{M}_{n,p}=\{f \in F\mid f-1 \in {\Delta}^n_p\}.$
We shall denote these filtrations simply by $\mathcal{M}_n$, omitting $p$, since its choice will be always clear
from the context. Let us introduce the notation $\mathbb{Z}_{(p)}$ for $\mathbb{Z}$ in the case
of discrete groups and for
$\mathbb{Z}_{p}$ in the case of pro-$p$-groups.

\begin{definition} We shall call presentation \eqref{eq1} quasirational ($QR$- presentation)
if one of the following three equivalent conditions holds:

\begin{description}[Type 1]
\item[(i)]{for each $n>0$ and for each prime $p\geq2$, the $F/R\mathcal{M}_n$-module $R/[R,R\mathcal{M}_n]$
has no $p$-torsion ($p$ is fixed for pro-$p$-groups and runs over all prime numbers $p\geq2$ and the corresponding
Zassenhaus $p$-filtrations in the discrete case).}
\item[(ii)]{the quotient module of coinvariants $\overline{R}_G=\overline{R}_F=R/[R,F]$ is torsion free.}
\item[(iii)]{$H_2(G,\mathbb{Z}_{(p)})$ is torsion free.}
\end{description}

\end{definition}

Proof of equivalence of conditions (i) - (iii) is contained in ~\cite[Proposition ~4]{Mikh2014} and
\cite[Proposition 1]{Mikh2017}. $QR$- presentations are curious in particular by the fact that they contain
aspherical presentations of discrete groups and their subpresentations, and also pro-$p$-presentations
of pro-$p$-groups with one relation. For the sake of completeness we present the proof of quasirationality for one-relator pro-$p$-groups. Let start with elementary lemma.

\begin{lemma} \label{l100} Let $G$ be a finite $p$-group, which acts on a finite abelian group $M$ of exponent $p$. Then the factor module of coinvariants $M_G=M/(g-1)M$ is nontrivial, where $(g-1)M$ - submodule of $M$, generated by elements of the form $(g-1)m$, $g \in G, m \in M$.
\end{lemma}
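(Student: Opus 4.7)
The plan is to reduce the statement to the nilpotency of the augmentation ideal of the group algebra $\mathbb{F}_p[G]$. First I would observe that, since $M$ is abelian of exponent $p$, it carries a natural structure of $\mathbb{F}_p[G]$-module, and the subgroup $(g-1)M$ appearing in the statement is precisely $I\cdot M$, where $I\subset \mathbb{F}_p[G]$ denotes the augmentation ideal (generated over $\mathbb{F}_p$ by the elements $g-1$, $g\in G$). Thus the coinvariant module is $M_G=M/IM$, and the lemma is equivalent to the claim: if $M\neq 0$, then $IM\neq M$.

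The key input is the classical fact that for a finite $p$-group $G$, the ideal $I\subset \mathbb{F}_p[G]$ is nilpotent. One way to see this is that $\mathbb{F}_p[G]$ is a finite-dimensional Artinian local ring whose unique maximal ideal is $I$: indeed, the Kolchin-type fixed-point argument already invoked at the end of Section \ref{s1.2} shows that the trivial module is the only simple $\mathbb{F}_p[G]$-module, so $I$ coincides with the Jacobson radical and must be nilpotent. Alternatively one may argue by induction on $|G|$ using a central subgroup of order $p$.

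With nilpotency of $I$ in hand, the proof finishes in one line by a Nakayama-style argument. If $M_G=0$, i.e., $M=IM$, then iterating gives $M=I^{n}M$ for every $n\geq 1$; choosing $n$ with $I^{n}=0$ forces $M=0$. Contrapositively, if $M$ is nontrivial, then so is $M_G$, which is the content of the lemma (with the tacit nontriviality assumption on $M$, without which the statement is vacuous).

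The main ``obstacle'' is really just identifying the right input; once the nilpotency of $I$ is on the table, no genuine computation with the particular module $M$ is needed. A dual route via invariants, using $M^G\neq 0$ for every nonzero finite $\mathbb{F}_p[G]$-module, would be equally short, but for coinvariants the Nakayama argument is the most direct.
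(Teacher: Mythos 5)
Your argument is correct, but it follows a different route from the paper's. The paper proceeds by induction on the rank of $M$: it uses the classical fixed-point property of $p$-groups to find a trivial submodule $M_0\cong\mathbb{Z}/p\mathbb{Z}\subseteq M^G$, observes that the quotient map $M\to M_1=M/M_0$ carries $(g-1)M$ onto $(g-1)M_1$ and hence induces a surjection $M_G\twoheadrightarrow (M_1)_G$, and concludes since $(M_1)_G\neq 0$ by the induction hypothesis (the base case being the rank-one module, which is trivial because $(|\mathrm{Aut}(\mathbb{Z}/p\mathbb{Z})|,p)=1$). You instead identify $(g-1)M$ with $IM$ for the augmentation ideal $I\subset\mathbb{F}_p[G]$, invoke the nilpotency of $I$ (via $I$ being the Jacobson radical of the local Artinian ring $\mathbb{F}_p[G]$, which again rests on the same fixed-point fact), and finish with the Nakayama-style iteration $M=IM\Rightarrow M=I^nM=0$. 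Both proofs ultimately reduce to the statement that the trivial module is the only simple $\mathbb{F}_p[G]$-module; yours packages this through the radical filtration $I^nM$ and is shorter once that ring-theoretic fact is granted, while the paper's works up the socle from $M^G$ and stays entirely at the level of elementary group actions. Your remark that $M\neq 0$ must be tacitly assumed is accurate and applies equally to the paper's formulation.
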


\begin{proof} We proceed by induction on rank $n$ of abelian group $M$. If $n=1,$ then $M=\mathbb{Z}/p\mathbb{Z}$ is the trivial $G$-module, since $(\mid Aut(\mathbb{Z}/p\mathbb{Z})\mid,p)=(\mid\mathbb{Z}/(p-1)\mathbb{Z}\mid,p)=1,$ therefore
$M_G=M\neq0$.

Let $n=k$, then submodule of elements fixed by $G$ in $M$ must be nontrivial and contains $M_0=\mathbb{Z}/p\mathbb{Z}$ with the trivial action of $G$. Denote $M_1=M/M_0$ and let $\psi:M\rightarrow M_1$ be the corresponding factorization homomorphism.
Since $\psi((g-1)M)=(g-1)M_1$,  $\psi$ induces epimorphism $M_G \twoheadrightarrow (M_1)_G$.
But $(M_1)_G\neq 0$ by induction hypothesis, therefore $M_G\neq0$.
\end{proof}

\begin{proposition} \label{p3}
A pro-$p$-presentation \eqref{eq1} of a pro-$p$-group $G$ with one relator $r$ is quasirational.
\end{proposition}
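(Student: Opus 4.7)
The plan is to verify the equivalent condition (ii): that $R/[R,F]$ is torsion free as a $\mathbb{Z}_p$-module. The key structural observation is that since $R$ is topologically normally generated in $F$ by the single relator $r$, the abelian pro-$p$-group $R/[R,F]$ is topologically cyclic, generated by the class of $r$. Hence $R/[R,F]$ is isomorphic either to $\mathbb{Z}_p$ or to a finite cyclic $p$-group $\mathbb{Z}/p^k\mathbb{Z}$, and the task reduces to excluding the latter.

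I would then split into cases via the Stallings--Stammbach five-term exact sequence
$$0 \to H_2(G,\mathbb{Z}_p) \to R/[R,F] \to F^{\mathrm{ab}}_p \to G^{\mathrm{ab}}_p \to 0$$
(the leading zero coming from $H_2(F,\mathbb{Z}_p)=0$ for the free pro-$p$ group $F$). If $r \notin \overline{[F,F]}$, then the image $[r]$ in the $\mathbb{Z}_p$-free module $F^{\mathrm{ab}}_p$ is nonzero, so the cyclic $R/[R,F]$ surjects onto a nonzero torsion-free cyclic submodule of $F^{\mathrm{ab}}_p$, forcing $R/[R,F] \cong \mathbb{Z}_p$. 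In the harder case $r \in \overline{[F,F]}$, I would pass to the lower central filtration $\gamma_*(F)$ of the free pro-$p$ group $F$: the filtration is separated and each graded piece $L_k := \gamma_k(F)/\gamma_{k+1}(F)$ is a finitely generated free $\mathbb{Z}_p$-module (the pro-$p$ analogue of the Magnus--Witt description of the associated graded Lie algebra of a free group). Let $k \ge 2$ be the unique integer with $r \in \gamma_k(F) \setminus \gamma_{k+1}(F)$. Since normality and closedness give $R \subseteq \gamma_k(F)$, and $[R,F] \subseteq [\gamma_k(F),F] \subseteq \gamma_{k+1}(F)$, the projection $F \twoheadrightarrow F/\gamma_{k+1}(F)$ descends to a well-defined map $R/[R,F] \to L_k$ sending the generator to the nonzero class $\bar r$. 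This realises the cyclic $\mathbb{Z}_p$-module $R/[R,F]$ as mapping onto a nonzero torsion-free cyclic $\mathbb{Z}_p$-submodule of $L_k$, forcing $R/[R,F] \cong \mathbb{Z}_p$.

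The role of Lemma \ref{l100} in the full picture is, I expect, in propagating the torsion-freeness obtained at the level of coinvariants up the Zassenhaus tower, so as to yield the formally stronger condition (i): given torsion-freeness of $R/[R,F]$, the lemma applied to a hypothetical nonzero $p$-torsion submodule of $M_n := R/[R, R\mathcal{M}_n]$ under the finite $p$-group $G_n := F/R\mathcal{M}_n$, combined with cyclicity of $M_n$ over $\mathbb{Z}_pG_n$, produces a contradiction with the established structure of the cyclic coinvariants $(M_n)_{G_n} = R/[R,F]$. The principal technical obstacle in this plan is the structural input that each graded piece $L_k$ of the pro-$p$ lower central series of the finitely generated free pro-$p$ group $F$ is $\mathbb{Z}_p$-free; once that is admitted (either by reduction to the Witt formula via pro-$p$ completion of discrete LCS quotients, or via the PBW description of the enveloping algebra $\mathbb{Z}_pF$), the remaining steps are formal.
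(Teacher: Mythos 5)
Your argument is correct in substance but reaches the key point --- torsion-freeness of the coinvariants $R/[R,F]$ --- by a genuinely different route than the paper. The paper does not filter $F$: it uses instead that $R=(r)_F$, as a closed subgroup of a free pro-$p$-group, is itself free with a basis converging to $1$ consisting of conjugates of $r$, so that $\overline R$ is a free $\mathbb{Z}_p$-module on the $G$-orbit of $\bar r$; the ``sum of coordinates'' map $\tau:\overline R\to\mathbb{Z}_p$ is then checked to be a $G$-module homomorphism onto the trivial module, hence factors through the coinvariants and exhibits the procyclic group $R/[R,F]$ as surjecting onto $\mathbb{Z}_p$. You replace this by locating $r$ in the unique layer $\gamma_k(F)\setminus\gamma_{k+1}(F)$ of the closed lower central series and mapping $R/[R,F]$ into the graded piece $L_k$; this is the classical one-relator argument and it works, but the load-bearing input --- that $\gamma_k(F)/\gamma_{k+1}(F)$ is a free $\mathbb{Z}_p$-module for a finitely generated free pro-$p$-group and that $\bigcap_k\gamma_k(F)=1$ (both available, e.g., via the Magnus embedding of $F$ into $\mathbb{Z}_p\langle\langle X\rangle\rangle$) --- needs a reference just as much as the paper's basis-of-conjugates fact does; you rightly flag it as the technical obstacle. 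Note also that your five-term-sequence case $r\notin\overline{[F,F]}$ is just the $k=1$ instance of the filtration argument and can be absorbed into it. The remaining difference is one of scope: you stop at condition (ii) and invoke the cited equivalence of (i)--(iii), whereas the paper goes on to verify (i) directly for every $n$, splitting $R/[R,R\mathcal{M}_n]$ into torsion and free parts and using Lemma \ref{l100} together with $\dim_{\mathbb{F}_p}R/R^p[R,F]=1$ to exclude torsion; your closing paragraph correctly anticipates that this is exactly where Lemma \ref{l100} enters, so the two proofs reconverge there.
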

\begin{proof} First note, that $R=(r)_F$ has a basis $I$ converging to 1, which consist of elements $^f$, where $r=f^{-1}rf, f\in F$ and contain $r$. Then elements of the form $^g\overline{r}, g=\overline{f}=\pi(f)\in G$, where we denote $\overline{r}$ the image of $r$ in $\overline{R}$ turn out to be a basis convergent to 0 in $\overline{R}$.

Now we prove that $R/[R,RF]\cong\mathbb{Z}_p$ is a topological normal closure of the image of $r$. Indeed, since the map of $I$ into one point is continuous and $\overline{R}$ is free as abelian pro-$p$-group, then there exists $\tau:R/[R,R]\twoheadrightarrow \mathbb{Z}_p$ - an epimorphism of pro-$p$-groups which sends the basis $\langle^g\overline{r}, g\in G\rangle$ of $\overline{R}$ into the element $\tau(\overline{r})=\psi(1)$, where $\psi:\mathbb{Z}_pG\rightarrow R/[R,RF]$ be the map of taking coinvariants.
$$\xymatrix{
  \mathbb{Z}_pG \ar[rr]^{\phi} \ar[dr]_{\psi}
                &  &    \overline{R} \ar[dl]^{\tau}    \\
                & \mathbb{Z}_p                 }$$

 The composition of $\tau$ and $\phi$ equals $\psi$ as a homomorphism of free Abelian pro-$p$-groups. Since $\phi$ is epi, then $\tau$ must be $G$-module homomorphism. Indeed, finite sums $a=\sum_{i\in I} a_i ^{g_i}\overline{r}$ generate the dense subgroup in $\overline{R},$ and therefore the identities $\tau(g\cdot a)= g\cdot\tau(a) $ imply that $\tau$ is a module homomorphism (for profinite modules one should know the action on finite factors):

$$\sum_{i\in I} a_i=\tau(a)=g\cdot\tau(a)=\tau(g\cdot a)=\tau(g\cdot \sum_{i\in I} a_i\cdot  ^{g_i}\overline{r})=\tau(\sum_{i\in I} a_i\cdot ^{g \cdot g_i}\overline{r})=$$
$$=\mbox{commutativity of the diagram}=\psi(\sum_{i\in I} a_i(g\cdot g_i)=\sum_{i\in I} a_i$$

Since $im(\tau)$ is generated by the image of $\overline{r}$ and the action is trivial, then $R/[R,F]\cong\langle \tau(\overline{r})\rangle\cong\mathbb{Z}_p$. This follows from Lemma \ref{l100} and $dim_{\mathbb{F}_p}(R/R^p[R,F])=1$ for one relator pro-$p$-groups.
Actually, suppose $\overline{R}_n=R/[R,R\mathcal{M}_n]$ has torsion, then $\overline{R}_n\cong M_{tors} \oplus
M_{\mathbb{Z}_p},$ where $M_{tors}$ be a torsion subgroup, $M_{\mathbb{Z}_p}$ be the free $\mathbb{Z}_p$-module. $M_{tors}$ and $M_{\mathbb{Z}_p}$ are $\mathbb{Z}_p[G_n]$-submodules, where
$G_n=F/\mathcal{M}_nR$. Consider $mod(p)$ factor $\overline{R}_n$, then $\overline{R}_n/{p\overline{R}_n}$ (as $\mathbb{F}_p[G/\mathcal{M}_nR]$-module) has the decomposition $\overline{R}_n/{p\overline{R}_n}=M_{tors}/pM_{tors}\oplus M_{\mathbb{Z}_p}/pM_{\mathbb{Z}_p}$ and $M_{tors}\neq 0.$
Therefore $M_{tors}/pM_{tors}\neq 0$ and has exponent equals to $p$ as Abelian group. We have the finite $p$-group $G_n$ which acts on exponent $p$ Abelian group and hence by Lemma \ref{l100} $(M_{tors}/pM_{tors})_G\neq 0$.
$(M_{\mathbb{Z}_p})_G=\mathbb{Z}_p$ and by \cite[4.3]{Se4} $dim_ {\mathbb{F}_p}((M_{\mathbb{Z}_p}/pM_{\mathbb{Z}_p})_G\oplus (M_{tors}/pM_{tors})_G)=dim_{\mathbb{F}_p}(R/R^p[R,F])=1,$ since we have
$dim_{\mathbb{F}_p}((M_{\mathbb{Z}_p}/pM_{\mathbb{Z}_p})_G)=1$ then $dim_{\mathbb{F}_p}(R/R^p[R,F]\geq2$ and therefore a contradiction.
\end{proof}

\begin{definition}\label{d4}\cite[A.2.]{HM2003}
Let us fix a group $G$ (with pro-$p$- topology). Define the (continuous) prounipotent completion of
$G$ as the following universal diagram, in which $\rho$ is a (continuous) Zariski dense homomorphism
from $G$ to the group of $\mathbb{Q}_p$-points of a prounipotent affine group $G_w^{\wedge}$:
$$\xymatrix@R=0.5cm{
                &         G^{\wedge}_w(\mathbb{Q}_p)  \ar[dd]^{\tau}     \\
 G \ar[ur]^{\rho} \ar[dr]_{\chi}                 \\
                &         H(\mathbb{Q}_p)              }$$
We require that for each continuous and Zariski dense homomorphism $\chi$ there exist
a unique homomorphism $\tau$ of prounipotent groups, making the diagram commutative.
\end{definition}

If we consider a finitely generated free group $F(X)$, then, as it is easy to see, its prounipotent
completion possesses the universal properties inherent to a free object, and by analogy with the discrete
or pro-$p$ cases, we shall call such prounipotent group free and denote it by $F_u(X)$.
Interesting relations between completions in the positive and zero characteristics are obtained in \cite{Pri2012}.

In simplicial group theory, by analogy with gluing two-dimensional cells, it is convenient to identify presentation
\eqref{eq1} with the second step of construction of free simplicial (pro-$p$)resolution $F_{\bullet}$ of a (pro-$p$)group $G$
by the ``pas-$\grave{a}$-pas'' method going back to Andre \cite{And}:

\begin{equation}
\xymatrix{
& \ar@<1ex>[r]\ar@<-1ex>[r] \ar[r] & {F(X\cup Y)} \ar@<0ex>[r]^{d_0}\ar@<-2ex>[r]^{d_1} & F(X)
\ar@<-2ex>[l]_{s_0} \ar[r] & G,} \label{2}
\end{equation}
here $d_0, d_1, s_0$ for $x \in X, y \in Y, r_y \in R$ are defined by the identities
$d_0(x)=x,  d_0(y)=1, d_1(x)=x,  d_1(y)=r_y, s_0(x)=x.$

Recall \cite{Mikh2016} that \eqref{2} is a free finite type simplicial (pro-$p$) group,
degenerate in dimensions greater than two. If the pro-$p$- presentation \eqref{eq1} is minimal,
then $$|Y|=dim_{\mathbb{F}_p}H^2(G,\mathbb{F}_p),|X|=dim_{\mathbb{F}_p}H^1(G,\mathbb{F}_p).$$
Let us assign to a finite type simplicial presentation \eqref{2} a presentation of prounipotent groups
as follows (this construction will be called below by the schematization of a presentation).
First, consider the corresponding diagram of group rings
$\xymatrix{kF(X\cup Y)\ar[r]^{d_0} \ar@<-2ex>[r]^{d_1} & kF(X) \ar@<-2ex>[l]_{s_0}
}$.
Then we obtain from \eqref{2}, taking into account finite generation of groups,
using the $I$-adic completion, the following diagram of complete linearly compact Hopf algebras:
$$\xymatrix{\widehat{k}F(X\cup Y)\ar[r]^{d_0} \ar@<-2ex>[r]^{d_1} & \widehat{k}F(X) \ar@<-2ex>[l]_{s_0}
},$$
where $\widehat{k}F(X)= \varprojlim kF(X)/I^n,$ $I$ is the augmentation ideal in $kF(X)$.
Applying Pontryagin duality and antiequivalence of the categories of commutative Hopf algebras and
affine group schemes,
we obtain the diagram of free prounipotent groups
$$\xymatrix{F_u(X\cup Y)\ar[r]^{d_0} \ar@<-2ex>[r]^{d_1} & F_u(X) \ar@<-2ex>[l]_{s_0}}.$$

\begin{definition}
Let us say that we are given a finite type presentation of a prounipotent group $G_u$
if there exist finite sets $X$ and $Y$ such that $G_u$ is included into the following
diagram of free prounipotent groups:

\begin{equation}
\xymatrix{
{F_u(X\cup Y)} \ar@<0ex>[r]^{d_0}\ar@<-2ex>[r]^{d_1} & F_u(X) \ar@<-2ex>[l]_{s_0} \ar[r] & G_u},\label{5}
\end{equation}
in which the identities similar to \eqref{2} and $G_u\cong F_u(X)/d_1(kerd_0)$ hold.
\end{definition}

\emph{Denote $R_u=d_1(kerd_0)$, this is a normal subgroup in $F_u(X)$, and hence we obtain the analog of the
notion of presentation \eqref{eq1} for a prounipotent group $G_u,$ to which we shall refer also, for uniformity,
as to a presentation of type \eqref{eq1}.}

By analogy with the discrete and pro-$p$ cases, the set of rational points of the \emph{relation module}
$\overline{R_u}(\mathbb{Q}_p)=R_u/[R_u,R_u](\mathbb{Q}_p)\cong R_u(\mathbb{Q}_p)/[R_u(\mathbb{Q}_p),R_u(\mathbb{Q}_p)]$
\emph{of the prounipotent presentation} \eqref{5} is endowed with a structure of $\mathcal{O}(G_u)^*$- module
\cite[Proposition 3]{Mikh2016}.

$QR$-presentations can be studied by passing to the rationalized completion
$\overline{R}\widehat{\otimes} \mathbb{Q}_p=\varprojlim_n R/[R,R\mathcal{M}_n] \otimes \mathbb{Q}_p.$
It turns out \cite[Lemma 2]{Mikh2016} that the topological $\mathbb{Q}_p$-vector space
$\overline{R}\widehat{\otimes} \mathbb{Q}_p$ is identified with
$\overline{R^{\wedge}_w}(\mathbb{Q}_p)$ (where $\overline{R^{\wedge}_w}$ is the quotient group of $R^{\wedge}_w$
by the commutant),
and one can define on it the structure of topological $G$- module \cite[Definition 10]{Mikh2016}, where $G$ is the pro-$p$-group given by the
pro-$p$-presentation \eqref{eq1}. Moreover, these modules can be included into the exact sequence
related with the prounipotent module of relations \cite[Theorem 1]{Mikh2016}.

\subsection{Relation modules of prounipotent groups} \label{s1.4}

The celebrated Lyndon Identity theorem states that the relation modules of a discrete group
with one defining relation is induced from a cyclic subgroup. That is, let \eqref{eq1} be a presentation of a group
with one relation, then $$\overline{R}=R/[R,R]\cong \mathbb{Z}\otimes_{\langle u\rangle} \mathbb{Z}G,$$
where $R=(u^m)_F$ and $u$ is not a nontrivial power, $\langle u\rangle$ - the cyclic subgroup generated by $u$.

Presentations of pro-$p$-groups with one relation such that their $mod(p)$ module of relations
$\overline{R}/p\overline{R}=R/R^p[R,R]$ is induced,
$R/R^p[R,R]\cong \mathbb{F}_p\otimes_{\langle u\rangle} \mathbb{F}_pG,$
are (of course) permutational in the sense of the paper \cite{Mel1} (see also \cite[Definition 4]{Mikh2017}), i.~e.
$\overline{R}/p\overline{R}=R/R^p[R,R]\cong \mathbb{F}_p(T,t_0),$ where $(T,t_0)$ is a profinite
$G$-space with a marked point. O.~V.~Melnikov shows \cite[Theorem 3.2]{Mel1} that relation modules
of aspherical pro-$p$-groups with one defining relation are induced from cyclic subgroups,
as in the Lyndon Identity theorem.

In \cite[10.2]{Se} J.-P.~Serre asks: ``Let $r\in F^p[F,F]$, and let $G_r=F/(r)_F.$ Can one extend to $G_r$
the results proved by Lyndon in the discrete case?''

We see that this question (in the modern settings of \cite{Mel1}) is equivalent to the following one: ``Is it true that relation modules of
pro-$p$-groups with one defining relation are permutational?''

If the answer to this question were true, then each pro-$p$-group with one relator,
such that in its presentation \eqref{eq1} the normal subgroup $R=(r)_F$ is not generated by a $p$-the power,
would have, by \cite[Theorem 3.2]{Mel1}, cohomological dimension 2.

Shift of dimension enables one to calculate cohomology of a pro-$p$- group $H$
as invariants of certain modules. From the viewpoint of the category of representations,
in order to look similar to a group with elements of finite order,
it suffices for the elements of the group $H$ to act as elements of finite order,
although these elements can be actually not of finite order.
Using multiplication of the defining relator $r=y^p$ in the free pro-$p$-group
by elements $\zeta^p$ of special kind, in the next Subsection we shall obtain the defining relations
$y^p\cdot \zeta^p$, which act on finite dimensional modules of arbitrarily high given dimension
exactly as the initial relation $r$,
but are not $p$-th powers themselves. This observation is in favor of the assumption that
pro-$p$-groups with one relator, which are not generated by a $p$-the power,
in contrast to the discrete case, can have cohomological dimension greater than 2.

Nevertheless, we can construct, as in \ref{s1.3}, schematization of a pro-$p$- presentation \eqref{5}
(for details see \cite[3.3]{Mikh2016}). For such prounipotent presentation we shall prove the following
prounipotent analog of Lyndon's result, which can be interpreted as an answer to Serre's question \cite[10.2]{Se}.
We need the following definition \cite{Mikh2016}.

\begin{definition}
Let $A$ be a complete linearly compact Hopf algebra over a field $k$
(the field is considered with discrete topology).
By a left (or right) complete topological $A$-module we shall call a linearly compact topological
$k$- vector space $M$ with a structure of $A$-module
such that the corresponding $k$-linear action $ A\widehat{\otimes} M\rightarrow M$
is continuous. We assume that the topology on $M$ is given by a fundamental system of neighborhoods of zero
$M=M^0\supseteq M^1 \supseteq M^2\supseteq    \ldots,$ where $M^j$ are topological $A$-submodules in
$M$ of finite codimension (finiteness of type) and $M\cong \varprojlim M/M^j.$
By a homomorphism of topological $A$-modules one calls a continuous $A$- module homomorphism.
If the filtration $M^j$ admits a compression such that for each $j$
the compression quotients $M^{j_i}_i/M^{(j+1)_{i}}_{i+1}$ are trivial $A$- modules (i.~e. the action of $A$ is trivial),
then we shall call such topological $A$-module prounipotent.
 \end{definition}

In what follows, prounipotent topological modules will take its origin from a prounipotent group acting by conjugations on its normal subgroup. And we use the coinvariant filtration, that is Zariski closure $M^j=\overline{(g-1)M},$ where $g\in C_k(G)$ and we denoted by $C_k(G)$ the lower central series filtration of a prounipotent group $G$ and $A=\mathcal{O}(G)^*$ (see \cite[p.11]{Mikh2016}).

\begin{theorem}[Identity theorem for pro-$p$-groups] \label{t02}
Let $G$ be a pro-$p$-group with one defining relation given by a finite  presentation \eqref{eq1} in the category of pro-$p$-groups.
Then one has the isomorphism of prounipotent topological $\mathcal{O}(G_u)^*$-modules
$\overline{R_u}(\mathbb{Q}_p)\cong\mathcal{O}(G_u)^*,$
 where the prounipotent groups $G_u$ and $\overline{R_u}$ are obtained from the schematization of the initial pro-$p$-presentation.
\end{theorem}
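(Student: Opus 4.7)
The plan is to construct a natural surjection
$$\phi\colon\mathcal{O}(G_u)^*\twoheadrightarrow\overline{R_u}(\mathbb{Q}_p),\qquad 1\longmapsto\overline{\rho(r)},$$
and then show its injectivity by combining quasirationality of the presentation \eqref{eq1} with features special to prounipotent modules in characteristic zero.

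First, I would establish surjectivity from functoriality of schematization: since $R=(r)_F$ is the closed normal closure of the single element $r$ in $F$, the subgroup $R_u$ is the smallest closed normal subgroup of $F_u(X)$ containing $\rho(r)$. Passing to the abelianization, whose rational points carry the natural $\mathcal{O}(G_u)^*$-module structure recalled in Section \ref{s1.3}, we see that $\overline{R_u}(\mathbb{Q}_p)$ is cyclic, generated by $\overline{\rho(r)}$, so $\phi$ is well-defined, continuous and surjective. Next, I would identify the coinvariants: Proposition \ref{p3} gives quasirationality, and its proof in fact shows that $R/[R,F]\cong\mathbb{Z}_p$. Using the identification $\overline{R}\,\widehat{\otimes}\,\mathbb{Q}_p\cong\overline{R_u}(\mathbb{Q}_p)$ of \cite[Lemma~2]{Mikh2016} then yields
$$\overline{R_u}(\mathbb{Q}_p)_{G_u}\cong (R/[R,F])\otimes\mathbb{Q}_p\cong\mathbb{Q}_p,$$
which matches $\mathcal{O}(G_u)^*_{G_u}\cong\mathbb{Q}_p$ and shows that $\phi$ is an isomorphism after passage to coinvariants.

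To deduce injectivity, let $K=\ker\phi$. As a closed submodule of $\mathcal{O}(G_u)^*$, $K$ inherits a coinvariant filtration $K^j=K\cap\overline{(g-1)\mathcal{O}(G_u)^*}$ for $g\in C_k(G_u)$, making it a prounipotent topological $\mathcal{O}(G_u)^*$-module. Combining the $G_u$-homology long exact sequence of $0\to K\to\mathcal{O}(G_u)^*\to\overline{R_u}(\mathbb{Q}_p)\to 0$ (in which the coinvariant map is already known to be an isomorphism) with the exact sequence relating $\overline{R_u}(\mathbb{Q}_p)$ to the Fox-derivative presentation of the augmentation from \cite[Theorem~1]{Mikh2016}, one forces $K_{G_u}=0$. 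A Nakayama-type argument for prounipotent modules then forces $K=\varprojlim K/K^j=0$: the coinvariant filtration has trivial successive quotients, so the inverse limit vanishes.

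The main obstacle will be the final step. Over $\mathbb{F}_p$ or $\mathbb{Z}_p$ the analogous claim is precisely Serre's open question \cite[10.2]{Se}, for which the author himself anticipates a counterexample, so the argument must essentially exploit characteristic zero. Two inputs look crucial to me: (i) the image of $r$ under the Zariski dense map $F\to G_u(\mathbb{Q}_p)$ lies in $R_u$, so the ``Lyndon cyclic subgroup'' collapses to the trivial subgroup of $G_u$, giving exactly $\mathcal{O}(G_u)^*$ rather than a genuinely induced module; and (ii) the Tannakian nature of prounipotent groups in characteristic zero, which in conjunction with quasirationality allows the vanishing of $K$ to be tested on finite-dimensional representations of $G_u$, where the rigidity of the coinvariant filtration together with the absence of $p$-torsion anomalies makes the check tractable.
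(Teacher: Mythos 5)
Your overall strategy (build the cyclic-module surjection $\phi\colon\mathcal{O}(G_u)^*\to\overline{R_u}(\mathbb{Q}_p)$, match coinvariants, then kill the kernel by a topological Nakayama argument) is a legitimately different route from the paper, which instead dualizes via Pontryagin duality to $\mathcal{O}(G_u)$-comodules, runs the minimal injective resolution of $k_a$ over the free group $F_u$, takes $R_u$-fixed points, and reads off $H^1(R_u,k_a)\cong\mathcal{O}(G_u)$ from the Hochschild--Serre spectral sequence. But your argument has a genuine gap exactly where the theorem's content sits. From the long exact homology sequence of $0\to K\to\mathcal{O}(G_u)^*\to\overline{R_u}(\mathbb{Q}_p)\to 0$, using that $\mathcal{O}(G_u)^*$ is induced (so its higher homology vanishes) and that the coinvariant map is an isomorphism, what you actually get is $K_{G_u}\cong H_1(G_u,\overline{R_u}(\mathbb{Q}_p))$; and via the Crowell--Lyndon/Fox-derivative sequence this is $H_3(G_u,k)$ (equivalently, $K$ is the module $\pi_2$ of the four-term sequence in \ref{s1.1}). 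So ``$K_{G_u}=0$'' is not forced by the exact sequences you cite --- it \emph{is} the assertion $cd(G_u)\le 2$, i.e.\ the prounipotent one-relator asphericity theorem. The paper supplies precisely this missing input as Proposition \ref{p18} (\cite[Theorem 3.14]{LM1982}): a prounipotent group with $\dim_k H^n(G,k_a)=1$ for some $n>1$ has $cd(G)=n$, applied with $n=2$ since $\dim_k H^2(G_u,k_a)=1$ for a one-relator proper presentation. Without invoking that (or reproving it), your appeal to Tannakian duality and quasirationality is circular: injectivity of $\phi$ and vanishing of $\pi_2$ are the same statement.

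Two smaller points. First, the paper must split off a degenerate case: since $r$ lies in the Frattini subgroup but $p$-th powers survive in the $\mathbb{Q}_p$-prounipotent completion, $r$ may become a \emph{generator} of $F_u$, the induced prounipotent presentation is then not proper, $G_u$ is free of smaller rank, and the isomorphism is obtained from the free pre-crossed module structure rather than from $cd(G_u)=2$; your proposal does not distinguish this case. Second, once $K_{G_u}=0$ is actually in hand, your Nakayama step is fine for a closed submodule of the prounipotent module $\mathcal{O}(G_u)^*$ (the induced filtration is separated), so the architecture would work if you insert the Lubotzky--Magid input at the right place.
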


The analogy with the celebrated Lyndon Identity theorem comes from the fact that $G_u(\mathbb{Q}_p)$ generates $\mathcal{O}(G_u)^*$ as the topological vector space and therefore should be considered as a permutational basis of $\overline{R_u}(\mathbb{Q}_p)$. We can always identify $G_u$ with the continuous $\mathbb{Q}_p$-prounipotent completion of $G$ \cite[(4)]{Mikh2016}.
This result (Theorem \ref{t02}) has been announced in \cite[Corollary 12]{Mikh2015} and used in
\cite[Proposition 3, Corollary 3]{Mikh2016} for proof of the criterion of cohomological dimension
equal to 2,
providing a relation with the known group theory results (see \cite{Lab,Rom,Ts,GIK} and other papers cited there).
Let us recall the results of \cite[Proposition 3, Corollary 3]{Mikh2016}, since they shed light on the following
Serre's question from \cite{Se}.

Let $G_r=F/(r)_F,$ where $(r)_F$ is the normal closure of $r\in F^p[F,F]$ in a free pro-$p$-group
$F$ of finite rank,
then J.-P.~Serre asks:
``Can it be true that $cd(G_r)=2,$ if only $G_r$ is torsion free (and $r\neq1$)?"

By the well known Malcev theorem, a finitely generated discrete nilpotent group without torsion
is embedded \cite[4]{Vez} into its rational prounipotent completion. For $\mathbb{F}_p$-prounipotent groups,
whose instances are pro-$p$-groups, it would be too optimistic to hope for a similar statement, however we have

\begin{proposition}\label{p01} \cite[Proposition 3]{Mikh2016}
Let $G$ be a finitely generated pro-$p$-group given by presentation \eqref{eq1}
with one defining relation $r\neq1$, and assume that the natural homomorphism from $G$ to the group of $\mathbb{Q}_p$-points
$G^{\wedge}_w(\mathbb{Q}_p)$ of its prounipotent completion is an embedding, then
$cd(G)=2.$
\end{proposition}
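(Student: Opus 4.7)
The strategy is to use Theorem~\ref{t02} together with the embedding hypothesis to show that the relation module $\overline{R}=R/[R,R]$ is free of rank one as a topological $\mathbb{Z}_pG$-module, generated by the image $\overline{r}$. This yields the free resolution
$$0\to \mathbb{Z}_pG\to \mathbb{Z}_pG^{|X|}\to \mathbb{Z}_pG\to \mathbb{Z}_p\to 0$$
of length two, so $cd(G)\leq 2$, while $H^2(G,\mathbb{F}_p)\neq 0$ (since $r\in F^p[F,F]$, $r\neq 1$) gives the reverse inequality. Equivalently, by the Crowell--Lyndon sequence of \S\ref{s1.1} and Koch's theorem, it suffices to prove that the canonical surjection $\phi:\mathbb{Z}_pG\twoheadrightarrow\overline{R}$, $\alpha\mapsto\alpha\cdot\overline{r}$, is injective.

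Surjectivity of $\phi$ and the $\mathbb{Z}_p$-torsion-freeness of $\overline{R}$ both come from quasirationality (Proposition~\ref{p3} and its proof): the topological $G$-submodule generated by $\overline{r}$ already exhausts $\overline{R}$, and the latter is a torsion-free pro-$p$ module. Consequently the rationalization $\overline{R}\hookrightarrow\overline{R}\,\widehat{\otimes}\,\mathbb{Q}_p$ is injective, and by Theorem~\ref{t02} the target is canonically identified with $\mathcal{O}(G_u)^*$ as a topological $\mathcal{O}(G_u)^*$-module, with $\overline{r}$ sent to the free generator $1$. Any $\alpha\in\ker\phi$ therefore satisfies $\widetilde{\alpha}\cdot 1=0$ in $\mathcal{O}(G_u)^*$, where $\widetilde{\alpha}$ is the image of $\alpha$ under the natural ring map $\iota:\mathbb{Z}_pG\to\mathcal{O}(G_u)^*$ induced by $G\to G_u(\mathbb{Q}_p)$, so $\widetilde{\alpha}=0$.

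It remains to show that $\iota$ itself is injective. Distinct elements of $G_u(\mathbb{Q}_p)\subseteq\mathcal{G}\mathcal{O}(G_u)^*$ are linearly independent group-like elements in the Hopf algebra $\mathcal{O}(G_u)^*$, so the embedding hypothesis $G\hookrightarrow G_u(\mathbb{Q}_p)$ immediately yields an injection $\mathbb{Z}_p[G]\hookrightarrow\mathcal{O}(G_u)^*$ on the uncompleted group algebra. The passage to the $I$-adic completion will be carried out by comparing augmentation filtrations through the commutative square arising from \eqref{eq1} and the Magnus-type embedding $\mathbb{Z}_pF\hookrightarrow\mathcal{O}(F_u)^*$ for the free pro-$p$-group.

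The main obstacle is precisely this filtration comparison: one must rule out any element of $\bigcap_n I^n\mathbb{Z}_pG$ lying in $\ker\iota$. Here both the Zariski density of $G$ in $G_u$ and the \emph{permutational} nature of the relation module established in Theorem~\ref{t02} (which prevents any spurious kernel on the prounipotent side from leaking back through the rationalization) are essential to keep the comparison strict. Once $\iota$ is shown to be injective, the chain $\alpha\in\ker\phi\Rightarrow\widetilde{\alpha}=0\Rightarrow\alpha=0$ completes the argument.
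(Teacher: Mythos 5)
Note first that the paper does not prove Proposition~\ref{p01} at all: it is imported verbatim from \cite[Proposition 3]{Mikh2016}, so your proposal can only be assessed on its own terms and against the ingredients this paper supplies. Your reduction is sound as far as it goes: by the Crowell--Lyndon sequence of \S\ref{s1.1} and Koch's criterion, $cd(G)=2$ is equivalent to injectivity of $\phi:\mathbb{Z}_pG\to\overline{R}$, $\alpha\mapsto\alpha\overline{r}$; quasirationality of one-relator pro-$p$-presentations does give $\overline{R}\hookrightarrow\overline{R}\,\widehat{\otimes}\,\mathbb{Q}_p$, since each $R/[R,R\mathcal{M}_n]$ is $\mathbb{Z}_p$-torsion free; and the lower bound $cd(G)\geq2$ is correct. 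But there are two gaps. The smaller one: Theorem~\ref{t02} identifies $\overline{R_u}(\mathbb{Q}_p)$ with $\mathcal{O}(G_u)^*$, whereas \S\ref{s1.3} identifies $\overline{R}\,\widehat{\otimes}\,\mathbb{Q}_p$ only with $\overline{R^{\wedge}_w}(\mathbb{Q}_p)$ and states explicitly that the latter is related to the prounipotent relation module merely by an exact sequence (\cite[Theorem 1]{Mikh2016}); the natural map $\overline{R}\,\widehat{\otimes}\,\mathbb{Q}_p\twoheadrightarrow\overline{R_u}(\mathbb{Q}_p)$ is a surjection with generally nontrivial kernel, not a ``canonical identification''. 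This is repairable --- an $\iota$-equivariant surjection sending $\overline{r}$ to a unit of $\mathcal{O}(G_u)^*$ still yields $\alpha\in\ker\phi\Rightarrow\iota(\alpha)=0$ --- but equivariance and the fact that $\overline{r}$ hits a generator must be checked, not asserted.

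The fatal gap is the injectivity of $\iota:\mathbb{Z}_pG\to\mathcal{O}(G_u)^*$, which you yourself label ``the main obstacle'' and then do not prove. Linear independence of group-like elements gives injectivity only on the abstract group ring $\mathbb{Z}_p[G]$; the kernel of $\iota$ on the completed algebra is a closed two-sided ideal that need not contain any difference $g-h$ of group elements, so the hypothesis $G\hookrightarrow G_u(\mathbb{Q}_p)$ is strictly weaker than what you need. Concretely, writing $\mathbb{Z}_pG=\mathbb{Z}_pF/\mathfrak{r}$ and $\mathcal{O}(G_u)^*=\mathcal{O}(F_u)^*/\mathfrak{r}_u$ for the closed two-sided ideals generated by $r-1$, injectivity of $\iota$ amounts to the isolator statement $\mathfrak{r}_u\cap\mathbb{Z}_pF=\mathfrak{r}$, equivalently to separatedness of the rationalized augmentation filtration on $\mathbb{Z}_pG$ --- a statement about arbitrary algebra elements, not about group elements, and one that does not follow from Zariski density of $G$ in $G_u$ or from the ``permutational'' structure of $\overline{R_u}$. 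Your final paragraph replaces this step with an appeal to those two facts, which is an assertion rather than an argument; until it is supplied, the chain $\alpha\in\ker\phi\Rightarrow\iota(\alpha)=0\Rightarrow\alpha=0$ breaks at its last link.
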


Let us say that a pro-$p$-group $G$ is $p$-regular if for any finite quotient
$G/\mathcal{M}_n(G)$ there exists a nilpotent quotient $G/V$ without torsion such that $V\subset\mathcal{M}_n(G)$.
Let us also say that a pro-$p$-group $G$ has a $p$-regular filtration $\mathcal{V}=(V_n,n \in \mathbb{N})$
if for any finite quotient $G/\mathcal{M}_k(G)$ there exists $m(k) \in \mathbb{N}$ such that
$V_{m(k)}\subset\mathcal{M}_k(G)$ and the quotients of this filtration are torsion free.

\begin{proposition} \label{s4} \cite[Corollary 3]{Mikh2016}
Let $G$ be a $p$-regular pro-$p$-group with one relation, then $cd(G)= 2$. In particular, if $G$ has a
$p$-regular filtration $\mathcal{V}$, then $cd(G)= 2$.
\end{proposition}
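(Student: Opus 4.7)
The plan is to reduce to Proposition \ref{p01}: once we show that the canonical morphism $\rho:G\to G^{\wedge}_w(\mathbb{Q}_p)$ is injective, that proposition yields $cd(G)=2$. Thus the entire task is to produce, for each nontrivial $g\in G$, a continuous Zariski-dense homomorphism from $G$ to the $\mathbb{Q}_p$-points of some prounipotent group that sends $g$ to a nontrivial element; the universal property in Definition \ref{d4} will then force $\rho(g)\neq 1$.

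Given $1\neq g\in G$, pick $n$ with $g\notin\mathcal{M}_n(G)$; this is possible because the Zassenhaus filtration of a finitely generated pro-$p$-group separates points. By $p$-regularity there is a closed normal subgroup $V\subseteq\mathcal{M}_n(G)$ such that $N:=G/V$ is a torsion-free nilpotent pro-$p$-group, and the image of $g$ in $N$ is nontrivial. Being finitely generated, torsion-free and nilpotent, $N$ is a $p$-adic analytic group whose Campbell--Hausdorff/Malcev construction embeds it continuously and with Zariski-dense image into its finite-dimensional $\mathbb{Q}_p$-unipotent hull $N^{\wedge}_w(\mathbb{Q}_p)$. The composite $\chi:G\twoheadrightarrow N\hookrightarrow N^{\wedge}_w(\mathbb{Q}_p)$ is therefore a continuous Zariski-dense homomorphism, so Definition \ref{d4} produces a unique $\tau:G^{\wedge}_w\to N^{\wedge}_w$ with $\tau\circ\rho=\chi$. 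Since $\chi(g)\neq 1$ we conclude $\rho(g)\neq 1$, injectivity of $\rho$ follows, and Proposition \ref{p01} delivers $cd(G)=2$.

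The ``in particular'' clause is immediate: a $p$-regular filtration $\mathcal{V}=(V_n)$ realises the $p$-regularity condition by taking $V:=V_{m(k)}$ for each $k$, the quotient $G/V_{m(k)}$ being torsion-free nilpotent by hypothesis.

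The main technical ingredient I would expect to be delicate is the Malcev-type embedding of a finitely generated torsion-free nilpotent pro-$p$-group into its $\mathbb{Q}_p$-unipotent completion. For uniform pro-$p$-groups this is the classical exp/log equivalence with a $\mathbb{Z}_p$-Lie lattice, which injects into its $\mathbb{Q}_p$-linear envelope. In the general case one has to pass to a uniform open subgroup and use torsion-freeness of the successive central quotients (isomorphic to powers of $\mathbb{Z}_p$) to propagate injectivity; the paper's explicit warning that the analogue fails for $\mathbb{F}_p$-prounipotent completions is precisely what makes the characteristic-zero completion essential to this argument.
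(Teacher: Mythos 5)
Your proposal is correct and follows exactly the route the paper intends: the paper states this result as \cite[Corollary 3]{Mikh2016} immediately after Proposition \ref{p01} precisely because $p$-regularity supplies enough torsion-free nilpotent quotients $G/V$ (each embedding in its $\mathbb{Q}_p$-Malcev hull) to separate points of $G$ in $G^{\wedge}_w(\mathbb{Q}_p)$, after which Proposition \ref{p01} gives $cd(G)=2$. The one ingredient you rightly flag as delicate --- that a finitely generated torsion-free nilpotent pro-$p$-group injects into its continuous $\mathbb{Q}_p$-prounipotent completion --- is the standard Lazard--Mal'cev fact the paper is implicitly invoking, so there is no gap.
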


\subsection{Tannaka duality and conjurings.}\label{s1.5}
 Let $\omega:Rep_k(G)\rightarrow Vec_k$ be the erasing functor from the category of representations
 of an affine group scheme $G$ to the category of vector spaces. Then by definition elements of
 $\underline{Aut}^{\otimes}(\omega)(A)$, for any $k$-algebra $A$, are families
 $(\lambda_X), X\in ob(Rep_k(G)),$ where $(\lambda_X)$ are
 $A$-linear automorphisms of $A$-modules $X\otimes A$ such that

 \begin{description}
\item[(i)]{$\lambda_{X\otimes Y}=\lambda_X\otimes \lambda_Y$.}
\item[(ii)]{$\lambda_1= id_A$.}
\item[(iii)]{$\lambda_Y\circ(\alpha \otimes 1)=(\alpha \otimes 1)\circ\lambda_X$
for any $G$-equivariant $k$-linear maps $\alpha:X\rightarrow Y$.}
\end{description}

The Tannaka duality establishes \cite[Proposition 2.8]{DM} the isomorphism of functors
$G\rightarrow \underline{Aut}^{\otimes}(\omega)$ on the category of $k$-algebras.
Thus, an affine group scheme can be reconstructed from its category of representations,
and hence the properties of an affine group scheme are determined by its representations.

Pro-$p$-groups are $\mathbb{F}_p$-points of prounipotent $\mathbb{F}_p$-group schemes \eqref{s1.2}.
Our conjecture states that among pro-$p$-groups $G$ with one relator,
which are not embedded into their prounipotent completions, there are those which are torsion free but $cd(G)>2$.
It turns out that one can hide torsion of relations without changing the behavior of any unipotent
representation $F\rightarrow GL_n(\mathbb{Q}_p)$ and $F\rightarrow GL_n(\mathbb{F}_p)$ for arbitrarily
large fixed $n\in \mathbb{N}.$ To be more precise, one has the following

\begin{theorem} \label{p02}
Let $r=w^{p^l}$ be an element of the free pro-$p$-group $F=F_p(d)$ of the rank $d\geq 2$ which is a $p^l$-th power and fix $n\in \mathbb{N}.$ Then there exist elements $z_n \in F$ (``conjurings'')
with the following properties:

\begin{description}
\item[a)]{for any unipotent representations
$\phi:F\rightarrow GL_n(\mathbb{Q}_p)$ and $\psi:F\rightarrow GL_n(\mathbb{F}_p)$, one has
$\phi(z_n)=1, \psi(z_n)=1.$}
\item[b)]{ $r\cdot z_n^p$ is not a $p$-th power.}

\end{description}
\end{theorem}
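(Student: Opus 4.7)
\noindent\textit{Proof plan.} For part (a), I observe that any unipotent representation $\phi\colon F\to GL_n(\mathbb{Q}_p)$, respectively $\psi\colon F\to GL_n(\mathbb{F}_p)$, has its image contained in the upper-unitriangular subgroup $U_n$, which is nilpotent of class exactly $n-1$. Both $\phi$ and $\psi$ therefore vanish on the closed $n$-th term of the lower central series $\overline{\gamma_n(F)}$ of the free pro-$p$-group, and it is enough to search for $z_n$ inside $\overline{\gamma_n(F)}$.

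For part (b), subject to $z_n\in\overline{\gamma_n(F)}$, I need to refine the choice so that $rz_n^p$ is not a $p$-th power in $F$. I plan to use the Magnus embedding $F\hookrightarrow A=\mathbb{Z}_p\langle\!\langle X_1,\dots,X_d\rangle\!\rangle$ (and its $\mathbb{F}_p$-reduction) sending $x_i\mapsto 1+X_i$. In this embedding $u^p$ corresponds to $(1+\alpha)^p$ for $u\leftrightarrow 1+\alpha$; writing $w\leftrightarrow 1+\omega$ and $z_n\leftrightarrow 1+\zeta$, a hypothetical equality $rz_n^p=u^p$ becomes the formal series identity $(1+\omega)^{p^l}(1+\zeta)^p=(1+\alpha)^p$ with $1+\alpha$ group-like. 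I would then take $z_n$ to be an explicit basic commutator of weight $n$ whose symbol $\bar\zeta$ in the free restricted Lie algebra $\mathrm{gr}_\bullet(F)$ on $d$ generators (the associated graded of the Zassenhaus filtration) sits in a weight piece that cannot be realised by combining $\omega^{p^l}$ with the image of the $p$-restriction $[p]$. A leading-term analysis on the restricted Lie algebra side then rules out the existence of such an $\alpha$.

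The main obstacle is this final non-realisation step. The $p$-th power map in the noncommutative algebra $A$ interacts with commutators through Jacobson's formula $(a+b)^p=a^p+b^p+\Lambda(a,b)$ (with $\Lambda$ a universal Lie polynomial in $a,b$), so one cannot simply compare leading Magnus coefficients modulo lower filtration. My strategy is to localise the question to a single finite nilpotent quotient $F/\mathcal{M}_M(F)$ with $M$ much larger than $\max(p^l,pn)$: in this finite $p$-group the set of $p$-th powers is a proper explicit subset, described via the Lazard correspondence (or equivalently the Jennings basis), and by a finite computation one can verify that a well-chosen basic commutator $z_n\in\overline{\gamma_n(F)}\setminus\overline{\gamma_{n+1}(F)}$ yields $rz_n^p\notin (F/\mathcal{M}_M(F))^p$. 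Since $p$-th powers in $F$ descend to $p$-th powers in every quotient, this forces $rz_n^p\notin F^p$ globally, completing the construction.
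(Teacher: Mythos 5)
Your part (a) is fine and is in fact more elementary than the paper's route: since a unipotent subgroup of $GL_n$ over a field is conjugate into the unitriangular group, which is nilpotent of class $n-1$, any nontrivial element of $\overline{\gamma_n(F)}$ already satisfies (a). The paper instead builds its conjurings from Magid's restricted identities $I^r(d,n)$ (which kill representations into $I+M_n(m_R)$ for \emph{all} admissible local rings $R$, a much stronger property than the theorem literally requires) intersected with the finite-index subgroup $\mathbb{M}(n,d)=\bigcap_\psi\ker\psi$ over the finitely many maps to $GL_n(\mathbb{F}_p)$; for the statement as written your observation suffices.

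Part (b), however, has a genuine gap, and the strategy you sketch is unlikely to close it. The decisive step --- showing that $r z_n^p$ is not a $p$-th power --- is deferred to ``a finite computation'' in a quotient $F/\mathcal{M}_M(F)$ that you never perform, and the leading-term analysis you propose cannot work as described: if $z_n$ is a basic commutator of weight $n$, then $z_n^p$ lies deep in the Zassenhaus filtration (weight $pn$), so the leading term of $r z_n^p$ is just that of $r=w^{p^l}=(w^{p^{l-1}})^p$, which \emph{is} the leading term of a $p$-th power. The obstruction is therefore invisible at leading order, and there is no a priori guarantee it becomes visible at any fixed finite depth $M$; moreover Jacobson's formula makes the set of $p$-th powers in $F/\mathcal{M}_M(F)$ genuinely hard to describe, so ``explicit subset'' is doing a lot of unproven work. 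The paper avoids all of this with a structural argument: it chooses $z_n$ in the (nontrivial, normal, hence non-procyclic) subgroup of conjurings so that $z_n$ does not centralize $w$, and then invokes Melnikov's pro-$p$ analogue of the Lyndon--Sch\"utzenberger theorem, by which an equation $w^{p^l}z_n^p=u^p$ in a free pro-$p$-group forces $\langle w^{p^{l-1}},z_n,u\rangle$ to have rank one, contradicting $z_n\notin Z(w)$. You would need either to import that theorem (at which point your Magnus-embedding machinery becomes unnecessary) or to supply the combinatorial computation you have only asserted.
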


In Subsection \ref{s2.4} we give, following the ideas of ~\cite{Mag}, construction of the Magid
identities for free prounipotent groups over arbitrary zero characteristics fields and prove the Theorem \ref{p02}.
In the presented article only the construction of conjurings is described, we plan to return to applications in subsequent works.

\section{Cohomology and presentations}\label{s3}

Cohomology theory of prounipotent groups over the algebraically closed field $k$ has been developed by A. Lubotzky and A. Magid \cite{LM1982}, when $k$ has zero characteristics. This theory closely parallels that of the cohomology of pro-$p$-groups \cite[I,4]{Se4}: the free prounipotent groups turn out to be those of cohomological dimension ones, the dimension of the first and second cohomology groups give numbers of generators and defining relations. In addition, authors have proved that one-relator prounipotent groups turn out to have cohomological dimension two, similar to discrete one-relator groups those have not a power elements as defining relations. Schematization leads to prounipotent groups over not algebraically closed fields ($k=\mathbb{Q}_p$ in our case) and hence we want to develop a similar theory. As was rigorously pointed out the author by Richard Hain, nothing new in this case can happen.
Indeed, let $k$ be a non algebraically closed field of zero characteristics with algebraic closure $\overline{k}$ and consider unipotent group $G_k$ over $k$. We already know that $G(k)\cong k^{\alpha}$ for some $\alpha\in \mathbb{N}$ and \cite[4.1]{Wat} imply that the closure $G_{\overline{k}}$ of $G_k$ in $\overline{k}^{\alpha}$ is just $\overline{k}^{\alpha}$. It turns out that $\mathcal{O}(G_{\overline{k}})\cong\mathcal{O}(G_k)\otimes_k \overline{k}$ and \cite[Proposition 4.18]{Jan} imply that $H^*(G_{\overline{k}},\overline{k})\cong H^*(G_k,k)\otimes_k \overline{k}.$
Below we introduce necessary notions for defining Hochschild cohomology groups of affine group schemes in the modern setting
\cite{Jan}. In the case of algebraically closed field they coincide \cite[p.~28]{Jan} with the ones defined in ~\cite{LM1982}.

\subsection{Modules and cohomology}\label{s3.1}

Let $G$ be an affine group scheme, and let $Rep(G)$ be the corresponding category of $G$-modules \cite[I,2.7,2.8]{Jan},
identified using the Yoneda lemma with the category of $\mathcal{O}(G)$-comodules \cite[I,2.8]{Jan}.
Each $G$-module $M$ is representable as the inductive limit of its finite dimensional submodules \cite[I,2.13(1)]{Jan}.
Recall, following \cite[I,2.2]{Jan}, that the notion of module is functorial. To this end, let us define for each
finite dimensional $k$-vector space $M$ certain $k$-group functor $M_a$ by the rule
$M_a(A)=(M\otimes A,+)$ for all $k$-algebras $A$. In our case, namely when $k$ is a field,
and dimension of $M$ is finite, $M_a$ is representable by the symmetric algebra of the dual $k$-module $M^*$,
which we denote by $S(M^*)$, then $\mathcal{O}(M_a)=S(M^*)$ \cite[3.6]{Milne}.

Denote by $Mor(G,M_a)$ the $k$-space of natural transformations of functors. On $Mor(G,M_a)$
one has left regular action of $G$, given by the formula
$(x\cdot f_R)(g)=f_R(gx),$ where $g\in G(R),f_R\in Mor(G(R),M_a(R)),x\in G(R).$ Let us introduce also right regular action, given by the formula
$f_R\cdot x=f_R(xg).$ If $M=k_a$ is the additive group of the field $k$ \cite[3.1]{Milne}, then
$Map(G,k_a)=\mathcal{O}(G)$ is the coordinate ring of the affine group scheme $G$ \cite[2.15]{Milne}.
For constructing injective envelopes we shall need the notion of induced module.
Thus, let $H$ be a subgroup of the affine group scheme $G$. For each $H$-module $M$ define
the induced module $M\uparrow^G_H$ as follows:
$$M\uparrow^G_H=\{ f\in Mor(G,M_a)\mid f(gh)=h^{-1}f(g),\forall g\in G(A),h\in H(A), A\in Alg_k\}, $$
where $G$ acts regularly on the left. In \cite[I,3.3]{Jan}, using the identification
$M\uparrow^G_H\cong (M\otimes \mathcal{O}(G))^H,$
$M\otimes \mathcal{O}(G)$ is endowed with a structure of $(G\times H)-$module, where the action of $H$ on $M$ is given,
and the action on $\mathcal{O}(G)$ is through the right regular representation; $G$ acts on $M_a$ trivially
and on $\mathcal{O}(G)$ from the left through the left regular representation;
one takes the tensor product of representations,
and it is shown that $M\uparrow^G_H$ is indeed a $G-$module.

For arbitrary $k$-module $M$, let $\varepsilon_M:M\otimes \mathcal{O}(G)\rightarrow M$
be a linear map $\varepsilon_M=id_M\otimes \varepsilon_G. $

\begin{proposition} [Frobenius reciprocity] ~\cite[I,3.4]{Jan} \label{p4}
Let $H$ be a closed subgroup of an affine group scheme $G$ and $M$ be an $H$-module.

a) $\varepsilon_M:M\uparrow^G_H\rightarrow M$ is a homomorphism of $H$-modules

b) For each $G-$module $N$ the map $\varphi\mapsto \varepsilon_M\circ \varphi $ defines an isomorphism
$$Hom_G(N,M\uparrow^G_H)\cong Hom_H(N\downarrow^G_H,M).$$
\end{proposition}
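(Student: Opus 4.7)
My plan is to establish Frobenius reciprocity in the scheme-theoretic setting by mimicking the classical adjunction between induction and restriction, working functorially on test $k$-algebras and exploiting the identification $M\uparrow^G_H\cong (M\otimes \mathcal{O}(G))^H$ from \cite[I,3.3]{Jan}. Part (a) will come from a direct computation with the defining constraint of $M\uparrow^G_H$; part (b) will be settled by exhibiting an explicit inverse to the assignment $\varphi\mapsto\varepsilon_M\circ\varphi$.

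For part (a), I fix $A\in Alg_k$, $h\in H(A)$, and $f\in (M\uparrow^G_H)(A)$. The $H$-action on $M\uparrow^G_H$ is inherited by restriction from the regular $G$-action, and evaluating the defining identity $f(gh)=h^{-1}f(g)$ at $g=1_{G(A)}$ directly yields $\varepsilon_M(h\cdot f)=h\cdot \varepsilon_M(f)$, so that $\varepsilon_M$ is a natural $H$-module map.

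For part (b), I construct the inverse map $\Phi:Hom_H(N\downarrow^G_H,M)\to Hom_G(N,M\uparrow^G_H)$ by sending an $H$-equivariant $\psi$ to the natural transformation whose value on $n\in N(A)$ and $g\in G(A)$ is $\Phi(\psi)(n)(g)=\psi(g^{-1}\cdot n)$. Three verifications are needed: first, that $\Phi(\psi)(n)\in M\uparrow^G_H$, which follows from $\psi(h^{-1}g^{-1}n)=h^{-1}\psi(g^{-1}n)$ by $H$-equivariance of $\psi$; second, that $\Phi(\psi)$ is $G$-equivariant, which is immediate from the formula; third, that $\Phi$ is two-sided inverse to $\varphi\mapsto\varepsilon_M\circ\varphi$, one direction being $\varepsilon_M(\Phi(\psi)(n))=\psi(n)$ by direct evaluation, and the other using that a $G$-equivariant $\varphi:N\to M\uparrow^G_H$ is recovered from $\varepsilon_M\circ\varphi$ via $\varphi(n)(g)=(\varepsilon_M\circ\varphi)(g^{-1}n)$.

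The main obstacle will be making the pointwise formula $\Phi(\psi)(n)(g)=\psi(g^{-1}n)$ genuinely well-defined within the scheme-theoretic formalism, rather than merely a set-theoretic assignment on $k$-points. This is cleanly handled by translating into comodule language: under the isomorphism $M\uparrow^G_H\cong (M\otimes\mathcal{O}(G))^H$, the map $\Phi(\psi)$ is reinterpreted as the composition $N\to N\otimes\mathcal{O}(G)\xrightarrow{\psi\otimes id}M\otimes\mathcal{O}(G)$ of the $N$-coaction with $\psi\otimes id$, and its image automatically lies in the $H$-invariants by $H$-equivariance of $\psi$ together with commutativity of the left and right regular actions on $\mathcal{O}(G)$. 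Once the comodule formulation is in place, the remaining checks reduce to routine Hopf algebra diagram chases.
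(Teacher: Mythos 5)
The paper does not actually prove this proposition: it is quoted verbatim from Jantzen [I, 3.4] and used as an imported fact, so there is no in-paper argument to compare yours against. Your proof is the standard adjunction argument for induction and restriction of affine group schemes, and it is essentially the proof in Jantzen: part (a) by evaluating the defining relation at $g=1$, part (b) by exhibiting the explicit inverse $\psi\mapsto\bigl(n\mapsto(g\mapsto\psi(g^{-1}n))\bigr)$ and then making the pointwise formula functorial via the comodule reformulation. That is the right route, and you correctly identify the one genuine subtlety (well-definedness at the level of functors of points rather than $k$-points).

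Two bookkeeping points would need fixing in a full writeup. First, with the paper's stated convention for the ``left regular action'', $(x\cdot f)(g)=f(gx)$, your evaluation at $g=1$ yields $\varepsilon_M(h\cdot f)=h^{-1}\varepsilon_M(f)$ rather than equivariance, and the assignment $n\mapsto(g\mapsto\psi(g^{-1}n))$ fails to be $G$-equivariant; your computations implicitly use the convention $(x\cdot f)(g)=f(x^{-1}g)$, which is what Jantzen actually employs and is presumably what the paper intends. Second, the composite $(\psi\otimes id)\circ\Delta_N$ as literally written corresponds to the function $g\mapsto\psi(gn)$, which does not satisfy $f(gh)=h^{-1}f(g)$ and so does not land in $(M\otimes\mathcal{O}(G))^H$; matching your pointwise formula requires inserting the antipode $\sigma_G$ into the second tensor factor, exactly as it appears in the paper's own statement of the tensor identity isomorphism. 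Neither issue is a conceptual gap; both disappear once a single consistent set of conventions is fixed before the diagram chase.
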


\begin{proposition}[Tensor identity] ~\cite[I,~3.6]{Jan} Let $H$ be a closed subgroup of an affine group scheme
$G$ and $M$ be an $H$-module. If $N$ is a $G$-module, then there is a canonical isomorphism of $G$-modules
$$(M\otimes N\downarrow^G_H)\uparrow^G_H\cong M\uparrow^G_H\otimes N.$$
\end{proposition}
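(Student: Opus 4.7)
The plan is to prove the Tensor identity by constructing an explicit natural $G$-module isomorphism based on the ``translation trick'' for comodules over a Hopf algebra. First I would reinterpret both sides in terms of Hopf-algebraic invariants via the identification $M\uparrow^G_H \cong (M \otimes \mathcal{O}(G))^H$ recalled just before Proposition \ref{p4}. Under this identification, the left-hand side becomes $(M \otimes N \otimes \mathcal{O}(G))^H$ with $H$ acting diagonally on all three tensor factors, whereas the right-hand side becomes $(M \otimes \mathcal{O}(G))^H \otimes N$ with $H$ acting on the first two factors only. The task is then to produce a natural $G$-equivariant $k$-linear isomorphism between these two invariant spaces.

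The main technical ingredient is the translation automorphism of $N \otimes \mathcal{O}(G)$ associated with a $G$-module $N$. Writing the comodule structure as $\rho_N(n) = \sum n_{(0)} \otimes n_{(1)}$, I would define
$$\sigma_N(n \otimes f) = \sum n_{(0)} \otimes n_{(1)} f, \qquad \sigma_N^{-1}(n \otimes f) = \sum n_{(0)} \otimes S(n_{(1)}) f,$$
where $S$ is the antipode of $\mathcal{O}(G)$; coassociativity together with the antipode axioms make these mutually inverse $k$-linear maps. The crucial property of $\sigma_N$ is that it intertwines the diagonal $H$-action on $N \otimes \mathcal{O}(G)$ (restriction on $N$, right regular on $\mathcal{O}(G)$) with the action that is trivial on $N$ and right regular on $\mathcal{O}(G)$, while commuting with the left regular $G$-action on $\mathcal{O}(G)$, because left and right translations on a group scheme commute. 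Applying $\mathrm{id}_M \otimes \sigma_N$ then produces a $G$-equivariant $k$-linear isomorphism between the two spaces of $H$-invariants displayed above, and this isomorphism is by definition the canonical map of the proposition.

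Translated back to the functor-of-points description, the resulting isomorphism is
$$\phi: M\uparrow^G_H \otimes N \longrightarrow (M \otimes N\downarrow^G_H)\uparrow^G_H, \qquad \phi(f \otimes n)(g) = f(g) \otimes g^{-1}n.$$
I would verify by direct substitution that $\phi(f \otimes n)$ satisfies the cocycle condition $F(gh) = h^{-1}F(g)$ defining the induced module on the right (using the diagonal $H$-action on $M \otimes N$) and that $\phi$ commutes with the left $G$-action on both sides. Naturality in $M$, $N$, and in the inclusion $H \subseteq G$ is built into the construction.

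The main obstacle is purely notational: one must keep the left and right $G$-actions on $\mathcal{O}(G)$ cleanly separated and track which action is invoked at each step of the translation. Once that bookkeeping is in place no further ingredient is needed; in particular the argument requires no finite-dimensionality assumption on $M$ or $N$, since it manipulates only comodule structures and invariants, which are compatible with expressing $G$-modules as filtered colimits of finite-dimensional submodules as in \cite[I,2.13]{Jan}.
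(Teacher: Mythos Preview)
The paper does not actually prove this proposition: it is stated with a citation to \cite[I,~3.6]{Jan} and no proof is given. The only trace of the argument in the paper is the explicit formula written out a few lines later for the special case $H=1$, $M=k_a$, namely $x\otimes f\mapsto (1\otimes f)\cdot(\mathrm{id}_N\otimes\sigma_G)\circ\Delta_N(x)$, which is precisely your translation automorphism $\sigma_N$.

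Your proposal is correct and is essentially the standard proof (the one in Jantzen): identify both sides with spaces of $H$-invariants in $M\otimes N\otimes\mathcal{O}(G)$, then use the comodule translation $\sigma_N$ to switch the $H$-action on the $N$ factor from the given one to the trivial one, while preserving the left $G$-action. The functor-of-points formula $\phi(f\otimes n)(g)=f(g)\otimes g^{-1}n$ you write is the usual group-theoretic counterpart. So there is nothing to compare against in the paper beyond noting that your $\sigma_N$ agrees with the paper's displayed formula in the special case it records.
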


Following ~\cite[I,~3.7]{Jan}, let us emphasize some useful corollaries from the propositions given above.
Assume that $H=1$, then
$M\uparrow^G_1=M\otimes \mathcal{O}(G), $
for any $k$-module $M$, and in particular
$k\uparrow^G_1=\mathcal{O}(G).$

Combining the latter identity with the Frobenius reciprocity b) we obtain, for each $G$-module $M$,
$$Hom_G(M,\mathcal{O}(G))\cong M^*.$$

If we put $M=k_a$ in the tensor identity, then for each $G$-module $N$ we obtain the remarkable isomorphism
$$N\otimes \mathcal{O}(G)\cong N\uparrow^G_1=N_{tr}\otimes \mathcal{O}(G),$$
given by the formula
$x\otimes f\mapsto (1\otimes f)\cdot(id_N\otimes\sigma_G)\circ\Delta_N(x),$ where
we have denoted by $N_{tr}$ the $k$-module $N$ with the trivial action of $G,$ and $\sigma_G$ is the antipode in
$\mathcal{O}(G)$.

We shall need $M^G$, the submodule of fixed points of a $G$-module $M$:
$$M^G=\{m\in M\mid g(m\otimes1)=m\otimes1, \forall g\in G(A),A\in Alg_k\}.$$
If in the definition we take $g=id_{\mathcal{O}(G)}\in G(\mathcal{O}(G))$, then we obtain
$$M^G=\{m\in M\mid \Delta_M(m)=m\otimes1\}.$$

Consider the regular representation of an affine group scheme $G:$
$$\Delta: \mathcal{O}(G)\rightarrow \mathcal{O}(G)\otimes \mathcal{O}(G).$$
If $H$ is a closed subgroup of $G$, then $\mathcal{O}(H)=\mathcal{O}(G)/I_H,$
where $I_H$ is the Hopf ideal defining the subgroup $H$, whence we obtain the $k$-linear map
$$\mu: \mathcal{O}(G)\rightarrow \mathcal{O}(G)\otimes \mathcal{O}(H)$$ which defines the structure
of a right $H$-module on the algebra $\mathcal{O}(G)$.

Since the category of $G$-modules is Abelian ~\cite[I,~2.9]{Jan} and since, due to \cite[I,~3.9]{Jan},
it contains enough injective objects, one can define cohomology groups $H^n(G, M)$ of an affine group scheme
$G$ with coefficients in a $G$-module $M$ as the $n$-th derived functors of the fixed points functor $( )^G$
computed for $M$. Note the possibility of computing $H^n(G, M)$ by means of the Hochschild complex
$C^*(G, M)$ ~\cite[I,~4.14]{Jan}.

Cohomology well behaves with respect to limits. Let us represent the affine group scheme $G$ as
$G\cong\varprojlim G_{\alpha}$, where $G_{\alpha}$ are affine algebraic group schemes.
Since $\mathcal{O}(G) \cong \varinjlim \mathcal{O}(G_{\alpha})$ and since each $G$-module $V$
can be represented as a direct limit of finite dimensional $G$-submodules $V_{\beta}$ \cite[Theorem 3.3]{Wat}, then
$(V\otimes \mathcal{O}(G)^{\otimes^n})^G\cong \varinjlim (V_{\beta}\otimes \mathcal{O}(G_{\alpha})^{\otimes^n})^{G_{\alpha}}.$
Homology commutes with direct limits, hence
$$H^n(G, V )\cong \varinjlim H^n(G_{\alpha}, V_{\beta}).$$
Note that pro-$p$-groups can be considered as $\mathbb{F}_p$-points of prounipotent groups over the field
$\mathbb{F}_p$ (for constructing the $\mathbb{F}_p$-Hopf algebra it suffices to consider the decomposition of the
$\mathbb{F}_p$-group ring of a pro-$p$-group into the projective limit of the group rings of finite $p$-groups,
consider the dual Hopf algebras, and take their inductive limit), then cohomology of pro-$p$-groups
in the sense of ~\cite[6.6]{ZR} coincides with the cohomology of the corresponding affine group schemes (this identification may be deduced from \cite[4.4]{Jan} and Pontryagin duality \cite[Proposition 6.3.6]{ZR}).

\begin{proposition} \label{p3} ~\cite[~1.10]{LM1982} Let $$1 \rightarrow H \rightarrow K \rightarrow G \rightarrow 1$$
be an exact sequence of affine group schemes, and let $M$ be a $G$-module. Then there exists
a spectral sequence with the initial term $E_2^{p,q}=H^p(G, H^q(H, N)),$
converging to $H^{p+q}(K, M).$
\end{proposition}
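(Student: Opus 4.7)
The plan is to obtain this spectral sequence as an instance of Grothendieck's theorem on the derived functors of a composition. The exact sequence exhibits $G$ as the quotient $K/H$ by a normal subgroup scheme, and hence the coordinate Hopf algebra $\mathcal{O}(G)$ is identified with $\mathcal{O}(K)^{H}$, the subalgebra of $H$-invariants for the regular action. Consequently, for any $K$-module $M$ the subspace $M^{H}$ inherits a canonical $G$-module structure, because the comodule structure map $\Delta_{M}\colon M\to M\otimes\mathcal{O}(K)$ carries $M^{H}$ into $M^{H}\otimes\mathcal{O}(G)$, and taking further $G$-invariants recovers $M^{K}$. The $K$-fixed-point functor thus factors as
$$(-)^{K}\colon \mathrm{Rep}(K)\xrightarrow{\;(-)^{H}\;}\mathrm{Rep}(G)\xrightarrow{\;(-)^{G}\;}\mathrm{Vec}_{k}.$$

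Both factors are left exact additive functors between abelian categories possessing enough injectives, as recalled just before the statement via \cite[I,~3.9]{Jan}. Grothendieck's theorem then yields the spectral sequence
$$E_{2}^{p,q}=H^{p}(G,H^{q}(H,M))\Longrightarrow H^{p+q}(K,M),$$
provided one verifies that $(-)^{H}$ sends injective $K$-modules to $G$-acyclic modules. To this end I would use the standard description of injective cogenerators in $\mathrm{Rep}(K)$: by \cite[I,~3.9]{Jan} together with the tensor identity recalled above, every injective $K$-module is a direct summand of one of the form $V\otimes_{k}\mathcal{O}(K)$, where $K$ acts trivially on $V$ and regularly on $\mathcal{O}(K)$. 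Since the two sides of the regular representation commute and $\mathcal{O}(K)^{H}\cong\mathcal{O}(G)$, one obtains
$$(V\otimes\mathcal{O}(K))^{H}=V\otimes\mathcal{O}(K)^{H}=V\otimes\mathcal{O}(G),$$
which is injective, in particular acyclic, as a $G$-module. Acyclicity is inherited by direct summands, so the hypothesis of Grothendieck's theorem is satisfied.

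The main obstacle is precisely the identification $\mathcal{O}(K)^{H}\cong\mathcal{O}(G)$, which encodes the exactness of $1\to H\to K\to G\to 1$ at the Hopf-algebra level. For algebraic group schemes this is classical faithfully flat descent for the quotient by a normal subgroup scheme; in the prounipotent framework of the paper one recovers it by passage to the limit using $\mathcal{O}(K)\cong\varinjlim\mathcal{O}(K_{\alpha})$, noting that fixed-point functors commute with the filtered colimits appearing and with a compatible tower of normal subgroup schemes $H_{\alpha}\subset K_{\alpha}$. Once this identification is in place, the verification of acyclicity and the rest of the argument are essentially formal.
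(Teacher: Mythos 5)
The paper gives no proof of this proposition at all: it is quoted (with two typos — the coefficients $N$ should be $M$, and $M$ must be a $K$-module, as you correctly read it) from Lubotzky--Magid \cite[1.10]{LM1982}, whose argument is exactly the Grothendieck composite-functor spectral sequence you describe. Your main steps — the factorization $(-)^K=(-)^G\circ(-)^H$ through the identification $\mathcal{O}(K)^H\cong\mathcal{O}(G)$, and the computation $(V\otimes\mathcal{O}(K))^H\cong V\otimes\mathcal{O}(G)$ on the cogenerating injectives $V\otimes\mathcal{O}(K)$ — are correct, and the identification $\mathcal{O}(K)^H\cong\mathcal{O}(G)$ is the same fact the author later invokes as \cite[16.3]{Wat} in the proof of Theorem~\ref{t02}.

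There is, however, one genuine gap. Grothendieck's theorem, with the acyclicity hypothesis you verify, yields $E_2^{p,q}=R^p\bigl((-)^G\bigr)\bigl(R^q\mathcal{F}(M)\bigr)$, where $\mathcal{F}=(-)^H\colon Rep(K)\to Rep(G)$ and $R^q\mathcal{F}(M)$ is computed from an injective resolution of $M$ \emph{in $Rep(K)$}. To rewrite this $E_2$-term as $H^p(G,H^q(H,M))$, with $H^q(H,M)$ the Hochschild cohomology of $H$ in the restriction of $M$, you must additionally know that injective $K$-modules restrict to $H$-acyclic $H$-modules; checking that $(-)^H$ lands in $G$-acyclics does not give you this, and without it the $q>0$ rows of your $E_2$-page are not identified with the ones in the statement. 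The missing input is that $\mathcal{O}(K)$ is injective as an $H$-module, which is where affineness of the quotient $K/H\cong G$ enters (see the treatment of induction and quotients in \cite[I,\,5]{Jan}); in the prounipotent, characteristic-zero setting of this paper it is concrete, since $K\to G$ splits as a map of schemes (prounipotent groups are pro-affine spaces), so $\mathcal{O}(K)\cong\mathcal{O}(H)\otimes\mathcal{O}(G)$ as $H$-modules with $H$ acting regularly on the first factor and trivially on the second, which is $H$-injective by the tensor identity you already quoted. With that one line added, both the hypothesis of Grothendieck's theorem and the identification of the $E_2$-term are in place and the proof is complete.
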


In the case the affine group scheme $G$ is prounipotent, there is a more precise description of injective
modules.

 \begin{proposition} \label{p8} ~\cite[~1.11]{LM1982}
Let $G$ be a prounipotent group, and let $V$ be a $G$-module. Then
$V^G\otimes \mathcal{O}(G)$ is an injective $G$-module containing $V$.
Each injective $G$-module containing $V$ contains a copy of $V^G\otimes \mathcal{O}(G)$.
\end{proposition}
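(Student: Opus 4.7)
The plan is to split the statement into two parts: first, that $V^G\otimes\mathcal{O}(G)$ is an injective $G$-module, and second, that it realizes the injective envelope of $V$, from which both the containment $V\hookrightarrow V^G\otimes\mathcal{O}(G)$ and the minimality property follow simultaneously.

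For injectivity, I would invoke Frobenius reciprocity (Proposition \ref{p4}) with $H=1$. Regarding $V^G$ as a trivial module, $V^G\otimes\mathcal{O}(G)\cong V^G\uparrow^G_1$, so $Hom_G(N,V^G\otimes\mathcal{O}(G))\cong Hom_k(N,V^G)$ functorially in the $G$-module $N$. The right hand side is exact in $N$ since every $k$-vector space is an injective object of $Vec_k$, and this exactness transports to the left, giving injectivity of $V^G\otimes\mathcal{O}(G)$.

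For the envelope claim, I would first identify the socle: since $G$ acts trivially on the left factor, $(V^G\otimes\mathcal{O}(G))^G=V^G\otimes\mathcal{O}(G)^G=V^G\otimes k$, using that a connected prounipotent $G$ has only constants as $G$-invariants of the left regular representation on $\mathcal{O}(G)$. The decisive prounipotency input is the statement that every nonzero $G$-module $M$ satisfies $M^G\neq 0$; this follows by writing $M$ as a directed union of finite-dimensional submodules, each factoring through some algebraic unipotent quotient $G_{\alpha}$, and applying Kolchin's fixed point theorem. Consequently the socle is an essential submodule of any $G$-module, so $V^G\otimes 1$ is essential in $V^G\otimes\mathcal{O}(G)$, identifying the latter as an injective envelope of $V^G$.

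Finally, the same essentiality-of-socle principle applied inside $V$ shows $V^G$ is essential in $V$, so the two injective hulls coincide: $I(V)=I(V^G)=V^G\otimes\mathcal{O}(G)$, producing the desired embedding $V\hookrightarrow V^G\otimes\mathcal{O}(G)$. For the minimality half, given any injective $J\supseteq V$, injectivity of $J$ extends the inclusion $V\hookrightarrow I(V)$ to a morphism $I(V)\to J$ whose kernel meets $V$ trivially and hence is zero by essentiality. The main obstacle is the prounipotency input---the reduction to algebraic unipotent quotients and the appeal to Kolchin's theorem to secure both $\mathcal{O}(G)^G=k$ and the nonvanishing of $M^G$ for $M\neq 0$---which is precisely where the prounipotency hypothesis enters and without which the assertion fails for general affine group schemes.
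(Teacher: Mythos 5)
Your argument is correct. The paper itself offers no proof of this proposition --- it is quoted from Lubotzky--Magid \cite[1.11]{LM1982} --- but your reconstruction (injectivity of $V^G\otimes\mathcal{O}(G)$ via Frobenius reciprocity from the trivial subgroup, identification of the socle, and essentiality of the socle via local finiteness plus Kolchin's fixed-point theorem, yielding both the embedding of $V$ and the minimality statement by the usual injective-hull argument) is precisely the standard route taken there. One cosmetic remark: $\mathcal{O}(G)^G=k$ needs no unipotency or connectedness (apply $\varepsilon\otimes \mathrm{id}$ to $\Delta f=f\otimes 1$); prounipotency enters only through the implication $M\neq 0\Rightarrow M^G\neq 0$, exactly as you say.
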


Due to the previous statement we can define the injective envelope of a $G$-module $V$ by the formula
$\mathcal{E}_0(V) = V^G\otimes \mathcal{O}(G).$ Put $\mathcal{E}_{-1}(V)=V$, and let
$d_{-1}:\mathcal{E}_{-1}(V)\rightarrow \mathcal{E}_0(V)$ be the corresponding inclusion.

 \begin{proposition} \label{p9}
 ~\cite[~1.12]{LM1982} Let $G$ be a prounipotent group and $V$ be a $G$-module. Define the
 minimal resolution $\mathcal{E}_i(V)$ and $ d_i : \mathcal{E}_i(V)\rightarrow \mathcal{E}_{i+1}(V)$ inductively,
 $$\mathcal{E}_{i+1}(V) = \mathcal{E}_0(\frac{\mathcal{E}_i(V)}{d_{i-1}(V))})\quad d_i=\mathcal{E}_i(V)\rightarrow
 \frac{\mathcal{E}_i(V)}{d_{i-1}(\mathcal{E}_{i-1}(V))}\rightarrow \mathcal{E}_{i+1}.$$
 Then $\{\mathcal{E}_i( V), d_i \}$ is an injective resolution of $V$ and $H^i(G, V) = \mathcal{E}_i( V)^G.$
\end{proposition}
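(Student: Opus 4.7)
The plan is to verify Proposition~\ref{p9} in two moves: first, that the inductively defined sequence $\{\mathcal{E}_i(V),d_i\}$ really is an injective resolution of $V$; second, that the fixed-point functor kills all the differentials of this resolution, so that $H^i(G,V)$ reads off directly as $\mathcal{E}_i(V)^G$.

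Write $W_{-1}=V$ and $W_i=\mathcal{E}_i(V)/d_{i-1}(\mathcal{E}_{i-1}(V))$ for $i\geq 0$, so that by construction $\mathcal{E}_{i+1}(V)=\mathcal{E}_0(W_i)=W_i^G\otimes\mathcal{O}(G)$ and $d_i$ is the composition $\mathcal{E}_i(V)\twoheadrightarrow W_i\hookrightarrow\mathcal{E}_0(W_i)$, where the second arrow is the canonical embedding supplied by Proposition~\ref{p8}. Injectivity of each $\mathcal{E}_i(V)$ is immediate from Proposition~\ref{p8}. Exactness is built into the factorization: $\ker d_i$ is precisely the kernel of the surjection $\mathcal{E}_i(V)\twoheadrightarrow W_i$, namely $d_{i-1}(\mathcal{E}_{i-1}(V))$ for $i\geq 1$, and $\ker d_0 = d_{-1}(V) = V$. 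Hence $0\to V\to\mathcal{E}_\bullet(V)$ is an honest injective resolution of $V$.

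For the cohomology computation, apply the fixed-point functor $(\cdot)^G$. Since every prounipotent group is connected and $\mathcal{O}(G)^G=k$, while $G$ acts trivially on the first tensor factor of $W_{i-1}^G\otimes\mathcal{O}(G)$ and by the regular representation on the second, we have $\mathcal{E}_i(V)^G=W_{i-1}^G\otimes 1\cong W_{i-1}^G$. The decisive point is that every differential $d_i^G$ vanishes. Indeed, the embedding $W_{i-1}\hookrightarrow W_{i-1}^G\otimes\mathcal{O}(G)$ of Proposition~\ref{p8} is $G$-equivariant, and on the invariant subspace $W_{i-1}^G$ (a trivial $G$-module) it necessarily factors through $(W_{i-1}^G\otimes\mathcal{O}(G))^G=W_{i-1}^G\otimes 1$; a small normalization allows us to take it to be the tautological map $w\mapsto w\otimes 1$. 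Consequently $\mathcal{E}_i(V)^G=W_{i-1}^G\otimes 1$ already lies inside the image $d_{i-1}(\mathcal{E}_{i-1}(V))$, so it is annihilated by the projection $\mathcal{E}_i(V)\twoheadrightarrow W_i$ and hence by $d_i$.

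With every differential of the complex $\mathcal{E}_\bullet(V)^G$ zero, its $i$-th cohomology is simply $\mathcal{E}_i(V)^G$, and since derived functors of $(\cdot)^G$ may be computed from any injective resolution, we conclude $H^i(G,V)=\mathcal{E}_i(V)^G$. The main obstacle I anticipate is justifying the normalization in the preceding paragraph: showing that the $G$-equivariant injection $W_{i-1}^G\hookrightarrow(W_{i-1}^G\otimes\mathcal{O}(G))^G$ induced by the Proposition~\ref{p8} embedding may be taken as the identity on $W_{i-1}^G$. This comes down to a compatible choice of injective envelope at each inductive step, and it hinges on the prounipotent-specific fact that the Proposition~\ref{p8} embedding really lands in the smaller module $V^G\otimes\mathcal{O}(G)$ rather than only in $V\otimes\mathcal{O}(G)$.
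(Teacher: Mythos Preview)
The paper does not supply its own proof of Proposition~\ref{p9}; it merely quotes the statement from \cite[1.12]{LM1982}. Your argument is correct and is essentially the standard one for minimal injective resolutions in a category with a unique simple object.

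The obstacle you flag in the final paragraph is not genuine, and you do not need any ad hoc normalization. Proposition~\ref{p8} says more than that $V^G\otimes\mathcal{O}(G)$ is some injective module containing $V$: the second sentence (``each injective $G$-module containing $V$ contains a copy of $V^G\otimes\mathcal{O}(G)$'') identifies it as the injective envelope, so the embedding $W_{i-1}\hookrightarrow W_{i-1}^G\otimes\mathcal{O}(G)$ is an \emph{essential} extension. An essential extension always induces an isomorphism on socles: if $0\neq s\in\operatorname{Soc}(E)$ then $ks$ is simple, hence $ks\cap W_{i-1}\neq 0$ forces $ks\subset W_{i-1}$ and $s\in\operatorname{Soc}(W_{i-1})$. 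In the prounipotent setting the socle of any module $M$ is exactly $M^G$ (the unique simple module being $k_a$), so the induced map
\[
W_{i-1}^G\;\longrightarrow\;(W_{i-1}^G\otimes\mathcal{O}(G))^G=W_{i-1}^G\otimes 1
\]
is automatically a bijection, and $\mathcal{E}_i(V)^G=W_{i-1}^G\otimes 1$ lies entirely in the image of $d_{i-1}$, as you claimed.
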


We shall say that an affine group scheme $G$ has cohomological dimension $n$
and write $cd(G) = n$, if for any $G$-module $V$ and for each $i > n, H^i(G, V) = 0$ and $H^n(G, V)\neq 0$.
If $G$ is prounipotent, then $cd(G) \leq n$ if and only if $H^{n+1}(G, k_a) = 0$, since
$k_a$ is the only simple $G$-module.

 \begin{proposition} ~\cite[~1.14]{LM1982} Let $G$ be a prounipotent group and $H$ be a subgroup.
 For any $H$-module $V$ there is an isomorphism
 $H^n(G, V\uparrow^G_H) \cong H^n(H, V)$ for all $n\in \mathbb{N}$. In particular, $cd(H) \leq cd(G).$
\end{proposition}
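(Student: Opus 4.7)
The plan is to prove this as the Shapiro lemma in the prounipotent setting, by showing that induction is an exact functor that preserves injectives, together with the base case coming directly from Frobenius reciprocity.

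First I would dispose of the case $n=0$. Applying Proposition \ref{p4}(b) with $N=k_a$ the trivial $G$-module gives
$$(V\uparrow^G_H)^G \cong Hom_G(k_a, V\uparrow^G_H) \cong Hom_H(k_a\downarrow^G_H, V) \cong V^H,$$
since $k_a\downarrow^G_H$ is still the trivial $H$-module. This identifies the $n=0$ derived functor on the right with the $n=0$ derived functor on the left.

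Second, I would show that the functor $V\mapsto V\uparrow^G_H$ from $H$-modules to $G$-modules is exact. Using the realization $V\uparrow^G_H \cong (V\otimes\mathcal{O}(G))^H$ from Subsection \ref{s3.1}, this reduces to the assertion that $\mathcal{O}(G)$ is flat as a comodule over $\mathcal{O}(H)$ (equivalently, that $\mathcal{O}(G)\to \mathcal{O}(H)$ admits an $\mathcal{O}(H)$-linear section up to a polynomial factor). In the prounipotent setting this holds: by the Cartier theorem recalled in Subsection \ref{s1.2}, $\mathcal{O}(G)$ is a polynomial algebra, and the inclusion $H\hookrightarrow G$ of prounipotent groups corresponds via the Lie algebra equivalence \cite[A.3]{Qui5} to a pronilpotent Lie subalgebra, which splits; hence $\mathcal{O}(G)\cong \mathcal{O}(G/H)\otimes \mathcal{O}(H)$ as right $\mathcal{O}(H)$-comodules, and induction is exact.

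Third, I would observe that induction preserves injectives. This is formal from the adjunction of Proposition \ref{p4}(b): since restriction $\downarrow^G_H$ from $G$-modules to $H$-modules is tautologically exact, its right adjoint $\uparrow^G_H$ must send injectives to injectives.

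Fourth, a standard effaceability argument finishes the proof. Pick an injective resolution $0\to V\to I^0\to I^1\to\cdots$ of $V$ as an $H$-module (which exists by the $H$-analogue of Proposition \ref{p9}). Induction is exact by step two, so $0\to V\uparrow^G_H\to I^0\uparrow^G_H\to I^1\uparrow^G_H\to\cdots$ is exact, and each $I^n\uparrow^G_H$ is $G$-injective by step three; this is therefore a $G$-injective resolution of $V\uparrow^G_H$. Applying $(-)^G$ and invoking step one term by term gives the complex $(I^\bullet)^H$, whose cohomology is by definition $H^n(H,V)$; this proves $H^n(G,V\uparrow^G_H)\cong H^n(H,V)$. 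The corollary is immediate: if $H^n(H,V)\neq 0$ for some $H$-module $V$, then $H^n(G,V\uparrow^G_H)\neq 0$, whence $cd(H)\leq cd(G)$.

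The main obstacle is step two, the exactness of induction. For arbitrary affine group schemes and arbitrary closed subgroups this can fail, so one genuinely has to exploit the prounipotent hypothesis, either through Cartier's polynomial-algebra theorem or equivalently through the pronilpotent Lie algebra correspondence, to obtain the flatness of $\mathcal{O}(G)$ over $\mathcal{O}(H)$ needed for the argument.
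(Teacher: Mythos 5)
Your proof is correct: the paper itself gives no argument for this proposition (it is quoted from Lubotzky--Magid), and your four steps --- Frobenius reciprocity for $n=0$, exactness of induction via the scheme-theoretic splitting $\mathcal{O}(G)\cong\mathcal{O}(G/H)\otimes\mathcal{O}(H)$ available in the prounipotent characteristic-zero setting, preservation of injectives by the right adjoint of the exact restriction functor, and the transport of an injective resolution --- reproduce the standard Shapiro-lemma proof used in the cited source. The only cosmetic point is that the relevant property in step two is coflatness of $\mathcal{O}(G)$ as an $\mathcal{O}(H)$-comodule (equivalently, injectivity of $\mathcal{O}(G)$ as an $H$-module) rather than flatness over the algebra $\mathcal{O}(H)$, but the decomposition you derive is exactly this statement, so the argument stands.
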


Since the following statement is important for the succeeding exposition and for demonstration how
Pontryagin duality helps one to transfer arguments from \cite{LM1982} into our situation,
let us give this statement with a full proof. Denote by $Hom(G,k_a)$ the set of affine group scheme homomorphisms from $G$ to $k_a$.
Let $V$ be a linearly compact topological vector space, then by $V^{\vee}=Hom_{cts}(V,k)$ we denote the set of continuous (in the linearly compact topology) maps of vector spaces from $V$ to $k$. We also note that for any Abelian unipotent group $G$ over a zero characteristics field $k$ one has $G(k)\cong k^n$ for some $n\in \mathbb{N}.$ Hence $G(k)$ could be considered as a $n$-dimensional vector space over $k$. Consequently $G(k)$ has a structure of pro-finite dimensional (linearly compact) topological vector space over $k$.

\begin{proposition} \label{p11}
 ~\cite[~1.16]{LM1982} Let $G$ be a prounipotent group, then:
 
1) $H^1(G, k_a) \cong Hom(G,k_a)$;

2) if $G$ is Abelian, then $G(k)^{\vee}=Hom_{cts}(G(k),k)\cong Hom(G,k_a)$.

\end{proposition}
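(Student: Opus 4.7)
The plan is to treat the two parts separately: part (1) by a direct computation with the Hochschild cochain complex $C^*(G,k_a)$ mentioned earlier (from \cite[I, 4.14]{Jan}), and part (2) by reducing to the finite-dimensional case via $G\cong\varprojlim G_\alpha$ and matching the two sides as filtered colimits of algebraic duals of finite-dimensional $k$-vector spaces.

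For part (1), I would identify $C^n(G,k_a)$ with $\mathcal{O}(G)^{\otimes n}$, since $k_a=k$ carries the trivial $G$-action. The differential $d^0:k\to\mathcal{O}(G)$ vanishes (trivial action), while $d^1:\mathcal{O}(G)\to\mathcal{O}(G)\otimes\mathcal{O}(G)$ is given by $f\mapsto\Delta(f)-f\otimes 1-1\otimes f$. Hence $H^1(G,k_a)$ is precisely the space of primitive elements of $\mathcal{O}(G)$. Each primitive $f$ determines a Hopf algebra map $\mathcal{O}(k_a)=k[t]\to\mathcal{O}(G)$ sending the primitive generator $t$ to $f$, and conversely, so the anti-equivalence between commutative Hopf algebras and affine group schemes yields a natural bijection between primitives and elements of $\mathrm{Hom}(G,k_a)$.

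For part (2), assume $G$ is abelian and write $G\cong\varprojlim G_\alpha$ with each $G_\alpha$ an abelian unipotent algebraic group over $k$. Since $\mathrm{char}(k)=0$, the structure theorem gives $G_\alpha\cong\mathbb{G}_a^{n_\alpha}$, so $G(k)\cong\varprojlim k^{n_\alpha}$ as a linearly compact topological $k$-vector space. On one side, $\mathcal{O}(G)=\varinjlim\mathcal{O}(G_\alpha)$ and the primitivity condition is linear, so $\mathrm{Prim}(\mathcal{O}(G))=\varinjlim\mathrm{Prim}(\mathcal{O}(G_\alpha))$; and since $\mathrm{Prim}(k[x_1,\ldots,x_{n_\alpha}])$ is just the space of linear forms, part (1) gives $\mathrm{Hom}(G,k_a)\cong\varinjlim(k^{n_\alpha})^*$. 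On the other side, the continuous dual of the linearly compact space $\varprojlim k^{n_\alpha}$ coincides, by Pontryagin duality for linearly compact $k$-vector spaces, with $\varinjlim(k^{n_\alpha})^*$. Naturality of both identifications delivers the isomorphism $G(k)^{\vee}\cong\mathrm{Hom}(G,k_a)$.

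The main subtlety will be the passage to the limit in part (2): one must verify that taking primitives genuinely commutes with the filtered colimit of commutative Hopf algebras (this is the linearity of the primitivity equation, but it should be stated carefully) and that the linearly compact topology on $G(k)$ induced from $\varprojlim k^{n_\alpha}$ is precisely the one for which $G(k)^{\vee}=\varinjlim(k^{n_\alpha})^*$. Both ingredients are essentially formal, but invoking them at the right place is what lets the Lubotzky--Magid argument of \cite{LM1982}, originally written for an algebraically closed $k$, carry over verbatim to a general field $k$ of characteristic zero (in particular to $k=\mathbb{Q}_p$, which is the case needed for the schematization theory of Section~\ref{s1.3}).
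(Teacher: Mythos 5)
Your proof is correct, but both halves take a genuinely different route from the paper's. For part (1) the paper does not use the Hochschild complex: it reads $H^1(G,k_a)$ off the minimal injective resolution $k_a\rightarrow\mathcal{O}(G)\rightarrow\cdots$ (Propositions \ref{p8} and \ref{p9}), identifies $H^1$ with the elements of $\mathcal{O}(G)$ that are $G$-invariant modulo constants, and then turns such an element, normalized by $a(1)=0$, into an additive function on $G(k)$ via Pontryagin duality. Your cocycle computation lands in the same place --- an invariant-mod-constants function vanishing at $1$ is exactly a primitive element of $\mathcal{O}(G)$ --- but your route has the advantage of not using prounipotency at all: the chain $H^1(G,k_a)\cong \mathrm{Prim}(\mathcal{O}(G))\cong Hom(G,k_a)$ is valid for any affine group scheme with trivial coefficients, whereas the minimal-resolution argument needs Proposition \ref{p8}. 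For part (2) the paper is much shorter: it invokes the correspondence between prounipotent groups and pronilpotent Lie algebras, under which an abelian $G(k)$ coincides with $\log G(k)$ as a linearly compact vector space, so $Hom(G,k_a)\cong Hom_{Lie}(\log G(k),k)\cong Hom_{cts}(G(k),k)$. Your reduction to $G_\alpha\cong k_a^{n_\alpha}$ followed by matching the two filtered colimits is a legitimate substitute (the characteristic-zero structure theorem for commutative unipotent groups that you invoke is itself proved via the exponential, so the two arguments rest on the same underlying fact). Just make sure to take the $G_\alpha$ to be the closed images of $G$, so that $G(k)=\varprojlim G_\alpha(k)$ carries the linearly compact topology whose continuous dual really is $\varinjlim (k^{n_\alpha})^{*}$, and note that the identification of $\mathrm{Prim}(k[x_1,\dots,x_{n_\alpha}])$ with the linear forms uses characteristic zero; with those two points made explicit your argument is complete.
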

\begin{proof}
1) Proposition \ref{p8} shows that the beginning of the minimal injective resolution of the trivial $G$-module $k_a$
(or equivalently of the trivial $\mathcal{O}(G)$-comodule) has the form $k_a\rightarrow \mathcal{O}(G),$
and hence, due to triviality of differentials on the fixed points of the minimal resolution (Proposition \ref{p9}),
elements of $H^1(G, k_a)$ correspond to $G$- invariant modulo $k_a$ elements of $\mathcal{O}(G)$.
The Pontryagin duality \cite[3]{Mikh2016} enables one to consider an element $a\in \mathcal{O}(G)$
as a $k$-linear function
$a: \mathcal{O}(G)^*\rightarrow k,$
continuous in the linearly compact topology on $\mathcal{O}(G)^*.$ Recall that $k$ is considered with the
discrete topology, and the regular action of $G(k)$ on $\mathcal{O}(G)$ is the action on functions by the formula
$(g\cdot a)(x)=a(x\cdot g), x\in\mathcal{O}(G)^*.$ Then $G(k)$- invariance modulo $k_a$
is written in the form $g\cdot a(x)-a(x)=const\in k $ for $\forall x\in \mathcal{O}(G)^*. $
Without loss of generality one can normalize $a$ putting $a(1)=0.$ Now we define the homomorphism
$f:G(k)\rightarrow k_a$ for $g\in G(k)$ by the formula $f(g)=g\cdot a(x)-a(x).$

Conversely, if we are given a homomorphism of prounipotent groups $f:G(k)\rightarrow k_a$, then
it is extended by linearity to a continuous in the linearly compact topology homomorphism of
topological vector spaces
$f:\mathcal{O}(G)^*\rightarrow k_a.$
Such $f$ is invariant modulo elements of the field. Indeed,
$(g\cdot f)(x)=f(x\cdot g)=f(x)+f(g)=f(x) \quad mod(k_a).$ 

2) Since $G$ is Abelian, the functorial correspondence between prounipotent groups and pronilpotent Lie algebras gives rise to the isomorphism $$Hom(G,k_a)\cong Hom_{Lie}(log(G(k)),k).$$ But commutative pro-finite dimensional Lie algebra $log(G(k))$ and $G(k)$ are the same as linearly compact topological vector spaces, hence $Hom_{Lie}(log(G(k),k)\cong Hom_{cts}(G(k),k)= G(k)^{\vee}.$
\end{proof}

\subsection{Presentations of prounipotent groups} \label{s3.2}
Below let $k=\mathbb{Q}_p$ or another field of characteristics zero, and let us identify prounipotent groups
with their groups of $k$-points (see Subsection \ref{s1.2}).

Let $Z$ be a set and $F(Z)$ be the free group on the set $Z$. Denote by $F(Z)^{\wedge}_u$
its $k$-prounipotent completion and $\rho :Z\rightarrow F(Z)^{\wedge}_u$ be the natural embedding. Let us construct a prounipotent group $F_u(Z)$ equipped with an inclusion
$i: Z\rightarrow F_u(Z)$, which will be called the free prounipotent group on the set $Z$.

Let $\mathcal{L}$ be the set of all normal subgroups $H \trianglelefteq F(Z)^{\wedge}_u$ of finite codimension
and such that the set $\{x \in Z \mid \rho(x) \notin H\}$ is finite. If $H_1, H_2 \in \mathcal{L}$, then
$H_1 \cap H_2 \in \mathcal{L}$.
Let $K = \cap \{ H \mid H \in \mathcal{L}\}$, then put $F_u(Z) = F(Z)^{\wedge}_u/K $. The definition shows that
$$F_u(Z)\cong \varprojlim \{ \frac{F(Z)^{\wedge}_u}{H} \mid H \in \mathcal{L}\}.$$
Put $i: Z\rightarrow F_u(Z)$; this is the composition of $\rho$ with the canonical homomorphism
$F(Z)^{\wedge}_u\rightarrow F_u(Z)$. It is not difficult to check \cite[p.83]{LM1982} that $i$ is an injective map.

\begin{definition} Let $Z$ be a set, then the prounipotent group $F_u (Z)$  will be called
the free prounipotent group on the set $Z$. Using $i$ we identify $Z$ with a subset of $F_u(Z)$.
\end{definition}

\begin{proposition} \label{p7} Let $Z$ be a set, and $G$ be a unipotent group. Then there is a bijection between
the homomorphisms $f:F_u(Z)\rightarrow G$ and the sets of elements $x_i\in G:Card(\{i \in Z  \mid x_i \neq e\})<\infty$,
so that the homomorphisms correspond to the sets $\{f(i) \mid i \in Z\}$.
\end{proposition}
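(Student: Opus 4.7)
The plan is to construct the bijection in both directions, invoking in turn the universal property of the discrete free group $F(Z)$, the universal property of the prounipotent completion from Definition \ref{d4}, and the fact that morphisms from a cofiltered limit of algebraic groups to a fixed algebraic group factor through a single term of the limit.

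For the direction from tuples to homomorphisms, I would start with a family $\{x_i\}_{i\in Z}\subseteq G$ of finite support $S=\{i:x_i\neq e\}$. The universal property of the free group yields a unique homomorphism $F(Z)\to G(k)$ sending $i\mapsto x_i$; let $U\subseteq G$ denote the Zariski closure of its image. Since $G$ is unipotent, so is $U$, and the tautological map $F(Z)\to U(k)$ is Zariski dense, so Definition \ref{d4} produces a unique morphism $\hat{f}:F(Z)^\wedge_u\to U$ of prounipotent groups, which composed with $U\hookrightarrow G$ extends the original assignment on $Z$.

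To show that $\hat{f}$ descends to $F_u(Z)=F(Z)^\wedge_u/K$, it suffices to verify $K\subseteq N:=\ker\hat{f}$. Since $G$ is a finite-dimensional unipotent algebraic group, $N$ has finite codimension in $F(Z)^\wedge_u$. Moreover, every $i\in Z\setminus S$ maps to $e$, so $\rho(i)\in N$, and hence $\{x\in Z\mid \rho(x)\notin N\}\subseteq S$ is finite. Thus $N\in\mathcal{L}$, whence $K=\bigcap_{H\in\mathcal{L}}H\subseteq N$, and $\hat{f}$ factors through a homomorphism $f:F_u(Z)\to G$ with $f(i)=x_i$ for all $i$.

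Conversely, given any $f:F_u(Z)\to G$, set $x_i:=f(i)$. Since $\mathcal{O}(G)$ is finitely generated and $\mathcal{O}(F_u(Z))\cong\varinjlim_{H\in\mathcal{L}}\mathcal{O}(F(Z)^\wedge_u/H)$, the Hopf algebra map $\mathcal{O}(G)\to\mathcal{O}(F_u(Z))$ dual to $f$ factors through some $\mathcal{O}(F(Z)^\wedge_u/H_0)$; dually, $f$ factors as $F_u(Z)\twoheadrightarrow F(Z)^\wedge_u/H_0\to G$. For $i$ with $\rho(i)\in H_0$ the image $f(i)$ is trivial, so $\{i:x_i\neq e\}\subseteq\{x\in Z\mid \rho(x)\notin H_0\}$, which is finite because $H_0\in\mathcal{L}$. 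Uniqueness of $f$ given $\{x_i\}$ follows from the Zariski density of (the image of) $Z$ in $F_u(Z)$, inherited from that of $\rho(Z)$ in $F(Z)^\wedge_u$. The main subtlety is the filtered-colimit identification of $\mathcal{O}(F_u(Z))$ used in this converse: once it is pinned down via Pontryagin duality applied to the defining inverse system of quotients, the factorization through a single $H_0\in\mathcal{L}$ reduces to the standard observation that a map out of a finitely generated algebra into a filtered colimit factors through some stage.
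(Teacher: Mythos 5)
The paper states this proposition without proof (it is imported from Lubotzky--Magid \cite{LM1982}, where the construction of $F_u(Z)$ via the family $\mathcal{L}$ is set up precisely so that this argument goes through), so there is no in-paper proof to compare against. Your argument is correct and is essentially the standard one: the forward direction correctly reduces to checking that $\ker\hat f\in\mathcal{L}$ (finite codimension because the Zariski closure $U$ of the image is algebraic, and the finiteness condition on $Z$ because the tuple has finite support), and the converse correctly uses that a morphism from the cofiltered limit $\varprojlim_{H\in\mathcal{L}}F(Z)^{\wedge}_u/H$ to an algebraic group factors through a single stage, via the identification of $\mathcal{O}(F_u(Z))$ with the directed union of the $\mathcal{O}(F(Z)^{\wedge}_u/H)$ and finite generation of $\mathcal{O}(G)$.
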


Proposition \ref{p7} implies that the free prounipotent group $F_u(Z)$ has the following lifting property: let
$$1 \rightarrow k_a\rightarrow E \xrightarrow{g} U \rightarrow 1$$ be an exact sequence of unipotent groups and
$f : F_u(Z) \rightarrow U$ be a homomorphism, then there exists a homomorphism $h : F_u(Z) \rightarrow E:gh = f.$

\begin{proposition} \cite[Th. 2.4]{LM1982} Let $G$ be a prounipotent group, then the following conditions are
equivalent:

(a) If $$1 \rightarrow K \rightarrow E \xrightarrow{g} F \rightarrow 1$$ is an exact sequence of
prounipotent groups and
$f : G\rightarrow F$ is a homomorphism, then there exists a homomorphism $h:G\rightarrow E$ such that $gh=f$.

(b) If $$1 \rightarrow k_a \rightarrow E \xrightarrow{g} F \rightarrow 1$$ is an exact sequence of
prounipotent groups and
$f : G\rightarrow F$ is a homomorphism, then there exists a homomorphism $h:G\rightarrow E$ such that $gh=f$.
\end{proposition}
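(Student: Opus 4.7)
The implication (a) $\Rightarrow$ (b) is immediate since (b) is just the special case $K = k_a$. For (b) $\Rightarrow$ (a), my plan is to decompose the given exact sequence $1 \to K \to E \xrightarrow{g} F \to 1$ into a tower of central $k_a$-extensions and to lift through it step by step.

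First I would construct a descending chain $K = K^{(0)} \supseteq K^{(1)} \supseteq \cdots$ of closed subgroups of $K$, each normal in $E$, such that each successive quotient $K^{(i)}/K^{(i+1)}$ is isomorphic to $k_a$ and sits centrally in $E/K^{(i+1)}$, and $\varprojlim E/K^{(i)} \cong E$. The construction proceeds in two stages. Begin with the topological lower central series of $K$: its terms are characteristic in $K$, hence normal in $E$; the successive quotients are abelian prounipotent $F$-modules (the conjugation action of $E$ factors through $F$ because $[K,K_i]\subseteq K_{i+1}$); and the filtration descends to $1$ because $K$ is pronilpotent. Then refine each abelian stratum as an $F$-module: writing the stratum as an inverse limit of finite-dimensional $F$-modules $V$, the image of $F$ in $\mathrm{GL}(V)$ is an algebraic quotient of a prounipotent group, hence itself unipotent, so by Kolchin's theorem $V$ admits a composition series of $F$-submodules whose quotients are trivial, i.e., isomorphic to $k_a$. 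Each resulting extension $1 \to k_a \to E/K^{(i+1)} \to E/K^{(i)} \to 1$ is automatically central, since the induced action $E/K^{(i+1)} \to \mathrm{Aut}(k_a) = \mathbb{G}_m$ of a prounipotent group on the multiplicative group is necessarily trivial.

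Next I carry out the lifting. Set $h_0 = f : G \to F = E/K^{(0)}$. Assuming $h_i : G \to E/K^{(i)}$ is already defined, apply hypothesis (b) to the central $k_a$-extension $1 \to k_a \to E/K^{(i+1)} \to E/K^{(i)} \to 1$ together with the composition $h_i$ to produce a lift $h_{i+1} : G \to E/K^{(i+1)}$. By construction the $h_i$ are compatible with the surjections $E/K^{(i+1)} \twoheadrightarrow E/K^{(i)}$, so they assemble into $h = \varprojlim h_i : G \to \varprojlim E/K^{(i)} = E$ satisfying $gh = f$, as required.

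The main obstacle I anticipate is the bookkeeping around convergence of the filtration in the prounipotent topology when $K$ fails to be algebraic. I would handle this by first reducing to the case where $E$ is algebraic: decompose $E = \varprojlim E_{\alpha}$ over its algebraic quotients, perform the filtration and the lifting separately inside each $E_{\alpha}$ (where the tower terminates in finitely many steps and there is no transfinite issue), and then reassemble by the universal property of the inverse limit. Coherence of the lifts across different $\alpha$ can be secured either by a Zorn-style choice of a maximal coherent family, or more cleanly by observing that hypothesis (b) propagates automatically to cofiltered limits of central $k_a$-extensions, so the passage to the limit creates no fresh obstruction.
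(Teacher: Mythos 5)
The paper does not prove this statement; it is quoted verbatim from Lubotzky--Magid \cite[Th.~2.4]{LM1982}, and your d\'evissage argument is essentially the proof given there: reduce (a) to (b) by filtering the kernel $K$ by closed subgroups normal in $E$ with successive quotients $k_a$ (central automatically, since a prounipotent group admits no nontrivial character to $\mathbb{G}_m=\mathrm{Aut}(k_a)$), lift step by step, and pass to the limit. The only point requiring real care is the one you already flag --- the refinement of an infinite-dimensional abelian stratum and the convergence $E\cong\varprojlim E/K^{(i)}$ --- and your proposed remedy (reduce to algebraic quotients of $E$ and glue by a Zorn-type maximal partial lift, using that a maximal proper closed $E$-normal subgroup $N'\subsetneq N$ with $N/N'\cong k_a$ always exists when $N\neq 1$) is exactly the standard way this is handled; so the proposal is correct in substance.
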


If a group satisfies the conditions of the previous Proposition, then we shall say that such prounipotent group
has the \emph{lifting property}.

\begin{lemma} \cite[Prop.2.8]{LM1982} Let $G$ be a prounipotent group, then there exists a free
prounipotent group $F_u(Z)$ and an epimorphism $f:F_u(Z)\rightarrow G$. The data $Z$ and $f$ can be chosen so that
$Hom_{cts}(G,k_a)$ has the dimension equal to the cardinality of $Z$. Assume that $Y$ is a set and
$g :F_u(Y) \rightarrow G$ is an epimorphism€, then $Card(Y)\geq Card(Z)=Hom_{cts}(G,k_a)$.
If $G$ has the lifting property then $f$ is an isomorphism.
\end{lemma}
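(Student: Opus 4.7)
The plan is to build $f$ from a set $Z$ indexing a basis of the abelianisation data of $G$, verify surjectivity by a nilpotent-quotient induction, and settle the cardinality bound and the lifting-property statement with the standard 5-term cohomology sequence.

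For existence and cardinality (the first two claims), set $d=\dim_k Hom_{cts}(G,k_a)$ and pick a set $Z$ with $|Z|=d$. Write $G=\varprojlim G_\alpha$ with $G_\alpha$ unipotent algebraic, so $Hom(G,k_a)=\varinjlim Hom(G_\alpha,k_a)$. Choose a basis $\{\varphi_i\}_{i\in Z}$ of $Hom(G,k_a)$ adapted to this filtered colimit, meaning that for each $\alpha$ all but finitely many $\varphi_i$ vanish on $G_\alpha$; such a basis exists because each $Hom(G_\alpha,k_a)$ is finite-dimensional. Dually, choose lifts $z_i\in G(k)$ whose images $\bar z_i$ in the abelianisation $G^{ab}$ constitute a topological basis (using Proposition \ref{p11}(2) to identify $G^{ab}(k)^\vee$ with $Hom(G,k_a)$) and have the property that in each $G_\alpha^{ab}$ only finitely many $\bar z_i$ are nontrivial. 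Applying Proposition \ref{p7} at every level $G_\alpha$ and passing to the inverse limit produces the desired homomorphism $f:F_u(Z)\to G$ with $f(i)=z_i$.

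To see $f$ is surjective, observe that the Zariski closed image of $f$ contains all $\bar z_i$ in $G^{ab}$ and hence surjects onto $G^{ab}$; for each unipotent (nilpotent) quotient $G_\alpha$, a lower-central-series induction using Baker--Campbell--Hausdorff shows that lifts of any generating set of $G_\alpha^{ab}$ generate $G_\alpha$, so the image equals $G$. For the minimality statement, any epimorphism $g:F_u(Y)\twoheadrightarrow G$ induces an injection $Hom(G,k_a)\hookrightarrow Hom(F_u(Y),k_a)$; by Proposition \ref{p7} with target $k_a$ the right-hand side has dimension $|Y|$, giving $|Y|\geq d=|Z|$.

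For the final claim, assume $G$ has the lifting property and set $K=\ker f$. The lifting property applied to extensions $0\to k_a\to E\to G\to 1$ is equivalent to $H^2(G,k_a)=0$ in the standard way. The Hochschild--Serre spectral sequence for $1\to K\to F_u(Z)\to G\to 1$ with trivial coefficients $k_a$ yields the 5-term exact sequence
$$0\to H^1(G,k_a)\to H^1(F_u(Z),k_a)\to H^1(K,k_a)^G\to H^2(G,k_a)\to H^2(F_u(Z),k_a),$$
whose final term vanishes as $F_u(Z)$ is free prounipotent (cohomological dimension $\leq 1$). The first arrow is an isomorphism by the construction that locks $|Z|=d=\dim H^1(G,k_a)$ via Proposition \ref{p11}(1). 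Consequently $H^1(K,k_a)^G=0$, so the pro-finite-dimensional $G$-module $K^{ab}$ has no nonzero $G$-invariant linear functional; Kolchin's theorem applied to each finite-dimensional $G$-subquotient of $K^{ab}$ then forces $K^{ab}=0$, and as $K$ is prounipotent its lower central series collapses and $K=1$. The main obstacle is the adapted-basis step: the $z_i$ must simultaneously satisfy the finite-support hypothesis of Proposition \ref{p7} in every quotient $G_\alpha$, and it is here that both the compatibility of the lifts and the identification $|Z|=\dim Hom_{cts}(G,k_a)$ are secured.
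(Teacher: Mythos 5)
The paper does not actually prove this lemma --- it is imported verbatim from Lubotzky--Magid --- so I can only measure your argument against the standard one. Your overall architecture is right, and three of the four claims are essentially complete: the bound $Card(Y)\geq Card(Z)$ via injectivity of $Hom(G,k_a)\hookrightarrow Hom(F_u(Y),k_a)$ together with $\dim Hom(F_u(Y),k_a)=Card(Y)$ (Proposition \ref{p7}) is correct; surjectivity of $f$ by the Frattini-type induction on the lower central series of each algebraic quotient is correct; and the final claim goes through by the five-term sequence: the lifting property kills $H^2(G,k_a)$ (the $G$-action on $k_a$ in any such extension is trivial, since unipotent groups admit no nontrivial multiplicative characters), your dual-basis normalization $\varphi_j(z_i)=\delta_{ij}$ makes $H^1(G,k_a)\to H^1(F_u(Z),k_a)$ an isomorphism even for infinite $Z$, hence $H^1(K,k_a)^G=0$, and Kolchin's theorem applied to the discrete comodule $H^1(K,k_a)\cong (K^{ab})^{\vee}$ forces $K^{ab}=0$ and then $K=1$. (A shorter route to this last claim: the lifting property applied to $1\to K\to F_u(Z)\to G\to 1$ gives a section $s$ of $f$; since $f^{ab}$ is an isomorphism by the same dual-basis computation, $s(G)\cdot[F_u(Z),F_u(Z)]=F_u(Z)$, and the Frattini argument you already used for surjectivity yields $s(G)=F_u(Z)$, i.e.\ $K=1$.)

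The genuine gap is exactly at the point you flag but do not close: the construction of $f$ itself. To define $f:F_u(Z)\to G=\varprojlim G_\alpha$ by applying Proposition \ref{p7} level by level, you need, for each $\alpha$, that all but finitely many of the chosen elements $z_i$ map to the identity \emph{in $G_\alpha$}. What you arrange is only that all but finitely many $\bar z_i$ vanish in $G_\alpha^{ab}$ --- a condition which is in fact automatic for any dual pseudobasis, because the finite-dimensional subspace $Hom(G_\alpha,k_a)\subseteq Hom(G,k_a)$ lies in the span of finitely many $\varphi_j$. But triviality in $G_\alpha^{ab}$ does not give triviality in $G_\alpha$: the element $z_i$ could map to a nontrivial commutator there. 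So the lifts $z_i\in G(k)$ of the $\bar z_i$ must be chosen inside $\ker(G\to G_\alpha)$ simultaneously for all $\alpha$ with $i$ outside the relevant finite set, and you have not shown such lifts exist; this convergence-to-identity issue is precisely what the Lubotzky--Magid construction of $F_u(Z)$ (via the family $\mathcal{L}$ of finite-codimension normal subgroups missing only finitely many generators) is designed to control. For finitely generated $G$ --- the only case this paper ever uses --- $Z$ is finite, the condition is vacuous, and your proof is complete; for general $G$ this step needs an actual argument.
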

The lifting property described above yields the following cohomological description of free prounipotent groups.

\begin{lemma} \label{l1} \cite[Th. 2.9]{LM1982} A prounipotent group $G$ is free if and only if
$cd(G) \leqslant 1.$
\end{lemma}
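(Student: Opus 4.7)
The plan is to prove the two directions separately, using the criterion stated in the excerpt that for prounipotent $G$ one has $cd(G)\leq 1$ if and only if $H^2(G,k_a)=0$.

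For the easy direction (free $\Rightarrow$ $cd(G)\leq 1$), I would exploit the lifting property of $F_u(Z)$. Classically, $H^2(G,k_a)$ classifies equivalence classes of extensions $1\to k_a\to E\to G\to 1$ in the category of prounipotent groups. By the preceding proposition, the lifting property for the kernel $k_a$ implies that any such extension splits when $G=F_u(Z)$: applying the lifting property to the identity map $F_u(Z)\to F_u(Z)$ furnishes a section. Hence every class in $H^2(F_u(Z),k_a)$ is trivial, and $cd(F_u(Z))\leq 1$ by the criterion.

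For the nontrivial direction ($cd(G)\leq 1$ $\Rightarrow$ free), I would apply the preceding lemma to obtain a free prounipotent group $F_u(Z)$ and an epimorphism $f:F_u(Z)\twoheadrightarrow G$ chosen with $\mathrm{Card}(Z)=\dim\mathrm{Hom}_{cts}(G,k_a)$, and write $K=\ker f$. The goal is $K=1$. Feed the extension $1\to K\to F_u(Z)\to G\to 1$ with coefficients $k_a$ into the Lyndon--Hochschild--Serre spectral sequence of Proposition \ref{p3} and read off the five-term exact sequence
\begin{equation*}
0\to H^1(G,k_a)\xrightarrow{f^{*}} H^1(F_u(Z),k_a)\to H^1(K,k_a)^{G}\to H^2(G,k_a).
\end{equation*}
By hypothesis the last term vanishes. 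The map $f^{*}$ is injective because $f$ is surjective, and by Proposition \ref{p11}(2) together with the dimension count for the free prounipotent group we have $\dim H^1(G,k_a)=\mathrm{Card}(Z)=\dim H^1(F_u(Z),k_a)$, so $f^{*}$ is an isomorphism. Consequently $H^1(K,k_a)^{G}=0$.

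The main obstacle, and the key step, is to pass from the vanishing of $G$-invariants to the vanishing of $H^1(K,k_a)$ itself, and from there to the triviality of $K$. For this I would use that every $G$-module is the directed union of finite-dimensional $G$-submodules, combined with the fixed-point theorem for prounipotent groups (the Kolchin/Hamilton statement cited earlier from \cite[VII,17]{Ham}): any nonzero finite-dimensional representation of a prounipotent group has a nonzero fixed vector. Thus $H^1(K,k_a)\neq 0$ would force $H^1(K,k_a)^{G}\neq 0$, a contradiction. Hence $\mathrm{Hom}_{cts}(K,k_a)\cong H^1(K,k_a)=0$, but a nontrivial prounipotent group always admits a nontrivial continuous homomorphism to $k_a$ (the abelianization is a nonzero pronilpotent commutative Lie algebra, which projects onto $k$). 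Therefore $K=1$ and $G\cong F_u(Z)$ is free.
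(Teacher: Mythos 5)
The paper does not actually prove this lemma: it is imported verbatim from \cite[Th.~2.9]{LM1982}, and the intended route is visible in the surrounding statements --- the equivalence of the two lifting properties (\cite[Th.~2.4]{LM1982}) together with the final sentence of the quoted \cite[Prop.~2.8]{LM1982} (``if $G$ has the lifting property then the minimal epimorphism $f:F_u(Z)\to G$ is an isomorphism''), so that the chain is $cd(G)\leq 1 \Leftrightarrow H^2(G,k_a)=0 \Leftrightarrow$ lifting property $\Leftrightarrow$ $G$ free. Your easy direction coincides with that route. Your hard direction is genuinely different and is a legitimate alternative: instead of splitting the minimal presentation via the lifting property, you kill the kernel $K$ cohomologically, using the five-term sequence of Proposition~\ref{p3} to get $H^1(K,k_a)^G=0$, the local finiteness of comodules plus the fixed-point theorem for prounipotent groups to upgrade this to $H^1(K,k_a)=0$, and Proposition~\ref{p11} plus pronilpotency of $\mathrm{log}\,K$ to conclude $K=1$. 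This buys a proof that never invokes the lifting property for $G$ itself and mirrors the classical pro-$p$ argument; the price is that you need $H^1(K,k_a)$ to be a rational (locally finite) $G$-module, which holds here because it is the continuous dual of the linearly compact abelianization $\overline{K}(k)$, but should be said.

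Two caveats. First, your identification of $H^2(G,k_a)$ with classes of extensions $1\to k_a\to E\to G\to 1$ of affine group schemes is correct for Hochschild cohomology but is an external ingredient not recorded in the paper. Second, and more substantively, the step ``$f^*$ is injective and $\dim H^1(G,k_a)=\mathrm{Card}(Z)=\dim H^1(F_u(Z),k_a)$, hence $f^*$ is an isomorphism'' is only valid when $Z$ is finite: an injective endomorphism of an infinite-dimensional vector space need not be onto. For general $G$ you must instead use that the presentation of \cite[Prop.~2.8]{LM1982} can be chosen \emph{proper} in the sense of Definition~\ref{d7}, i.e.\ with $H^1(G,k_a)\to H^1(F_u(Z),k_a)$ an isomorphism by construction; with that substitution (or with the standing finite-generation hypothesis of the paper) your argument goes through.
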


Now it is not difficult to obtain the statement used in \cite[Corollarry 3]{Mikh2016}.

\begin{proposition} \cite[Corollary 2.10]{LM1982} Let $H$ be a subgroup of a free prounipotent group $G$, then $H$ is free.
\end{proposition}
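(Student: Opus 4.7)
The plan is to reduce the statement to the cohomological characterization of free prounipotent groups already established as Lemma \ref{l1}, namely that a prounipotent group $G$ is free if and only if $cd(G)\leq 1$. Since the whole theory parallels the pro-$p$ setting, this result plays the same role here as Stallings' theorem (or its pro-$p$ analogue via Serre's cohomological characterization) plays in the discrete context.

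Concretely, I would proceed as follows. First, assume $G$ is a free prounipotent group. By Lemma \ref{l1}, $cd(G)\leq 1$. Next, invoke the subgroup-cohomology inequality stated immediately after Proposition \ref{p9}, which says that for any (closed) subgroup $H$ of a prounipotent group $G$ and any $H$-module $V$ one has the isomorphism $H^n(G, V\uparrow^G_H)\cong H^n(H,V)$, and hence $cd(H)\leq cd(G)$. Applied to our situation this gives $cd(H)\leq 1$. Finally, apply Lemma \ref{l1} in the opposite direction to conclude that $H$ itself is free prounipotent.

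The only step that requires some care is verifying the hypothesis of the subgroup-cohomology proposition, i.e. that $H$ is a closed subgroup in the sense used throughout Subsection \ref{s1.2}. For prounipotent $G$, closed subgroups correspond to pronilpotent Lie subalgebras under the log-exp equivalence, so any subgroup arising in practice (and in particular any image or kernel of a morphism of prounipotent groups, cf.\ the remarks in Subsection \ref{s1.2}) is automatically closed; with this understood, the induction formula $V\uparrow^G_H=(V\otimes\mathcal{O}(G))^H$ is available and the Shapiro-type isomorphism above holds. Once this closedness point is acknowledged, the argument is a one-line consequence of Lemma \ref{l1} applied twice.

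Thus the main (and really only) obstacle is conceptual rather than technical: one must be content that the cohomological characterization of freeness (Lemma \ref{l1}) and the Shapiro lemma for induced modules provide a complete substitute for the combinatorial Nielsen--Schreier machinery that would be used in the discrete case. No rewriting of free group words, coset enumeration, or Bass--Serre type tree action is needed in the prounipotent setting.
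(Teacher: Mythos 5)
Your argument is exactly the one the paper intends: it places the proposition immediately after Lemma \ref{l1} (freeness $\Leftrightarrow cd\leq 1$) and the Shapiro-type isomorphism $H^n(G,V\uparrow^G_H)\cong H^n(H,V)$ precisely so that the corollary follows by the two-line chain $cd(H)\leq cd(G)\leq 1$. The proposal is correct and takes essentially the same route as the source (Lubotzky--Magid), including the correct caveat that $H$ must be a closed subgroup for the induction formula to apply.
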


\begin{definition} A prounipotent group $G$ is called finitely generated if there exists a set of elements
$\{ g_1,..., g_n \}$ in $G$ such that the abstract subgroup in $G$ generated by $g_1, ,..., g_n$
is Zariski dense in $G$. In this case we say that $\{g_1 ,..., g_n\}$ is a set of generators in $G$.
If $G$ is finitely generated, then by the rank of $G$ we mean the minimal cardinality of a set of generators.
\end{definition}

\begin{theorem} \label{t04} A prounipotent group $G$ is finitely generated if and only if
$H^1(G, k_a)$ has finite dimension. If $G$ is finitely generated then the rank of $G$ equals
the dimension of $H^1(G, k_a).$
\end{theorem}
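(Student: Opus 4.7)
The plan is to combine Proposition~\ref{p11}(1), which identifies $H^1(G,k_a)$ with $\operatorname{Hom}(G,k_a)\cong \operatorname{Hom}_{cts}(G,k_a)$, with the preceding Lemma \cite[Prop.~2.8]{LM1982} characterizing $\dim\operatorname{Hom}_{cts}(G,k_a)$ as the minimal cardinality $|Z|$ among all epimorphisms $F_u(Z)\twoheadrightarrow G$. With this identification in hand, the theorem reduces to showing that this minimal cardinality coincides with $\operatorname{rank}(G)$ in the sense of the previous Definition.

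For the inequality $\operatorname{rank}(G)\geq \dim H^1(G,k_a)$, I start from a Zariski-dense abstract generating set $\{g_1,\ldots,g_n\}\subset G$. Using Proposition~\ref{p7} (extended from a unipotent to a prounipotent target by writing $G=\varprojlim G_\alpha$ and assembling compatible homomorphisms into each finite-dimensional quotient), I construct a homomorphism $\phi\colon F_u(\{1,\ldots,n\})\to G$ with $\phi(i)=g_i$. The image of $\phi$ is Zariski closed in $G$ because images of prounipotent group homomorphisms are closed (see Subsection~\ref{s1.2}), and since it contains all $g_i$ it also contains their Zariski closure, which is all of $G$; thus $\phi$ is surjective. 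The preceding Lemma then yields $n\geq \dim H^1(G,k_a)$, and minimising over $n$ gives the desired inequality.

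For the reverse inequality, I assume $\dim H^1(G,k_a)=n<\infty$ and apply the preceding Lemma to produce an epimorphism $\psi\colon F_u(\{1,\ldots,n\})\twoheadrightarrow G$. Setting $g_i=\psi(i)$, the abstract subgroup $\langle g_1,\ldots,g_n\rangle$ in $G$ is the $\psi$-image of the abstract subgroup generated by $Z=\{1,\ldots,n\}$ in $F_u(Z)$. By the very construction $F_u(Z)=F(Z)^{\wedge}_u/K$, the image of the abstract free group $F(Z)$ is Zariski dense in $F_u(Z)$; and a Zariski-dense abstract subgroup maps to a Zariski-dense abstract subgroup under any surjective prounipotent homomorphism (see the remark below). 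Hence $\langle g_1,\ldots,g_n\rangle$ is Zariski dense in $G$, so $G$ is finitely generated with $\operatorname{rank}(G)\leq n=\dim H^1(G,k_a)$. Combined with the first direction this yields both the equality $\operatorname{rank}(G)=\dim H^1(G,k_a)$ and the equivalence of finite generation with finite-dimensionality of $H^1(G,k_a)$.

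The main delicate point is the interplay between abstract generation (the definition of rank) and the Zariski topology on prounipotent groups. Two items deserve care: that Proposition~\ref{p7} applies with a prounipotent target $G=\varprojlim G_\alpha$, which works because only finitely many $g_i$ differ from the identity and Proposition~\ref{p7} applies compatibly to each $G_\alpha$; and that for a surjective homomorphism $\pi\colon H\twoheadrightarrow G$ of prounipotent groups the image $\pi(S)$ of a Zariski-dense abstract subgroup $S\subset H$ is Zariski dense in $G$, which follows because $\pi^{-1}$ of the Zariski closure of $\pi(S)$ is a Zariski-closed subset of $H$ containing $S$, hence equals $H$, so $\pi(H)=G$ is contained in the closure of $\pi(S)$. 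Once these housekeeping items are verified, the theorem is a formal consequence of Propositions~\ref{p7} and \ref{p11}(1) together with the preceding Lemma.
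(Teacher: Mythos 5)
Your proof is correct and follows exactly the route the paper sets up: the theorem is imported from Lubotzky--Magid and is meant to be read off from the preceding Lemma (\cite[Prop.~2.8]{LM1982}) together with the identification $H^1(G,k_a)\cong \operatorname{Hom}(G,k_a)$ of Proposition~\ref{p11}, plus the standard facts that images of prounipotent homomorphisms are Zariski closed and that $F(Z)$ is dense in $F_u(Z)$. The two ``housekeeping'' points you flag (extending Proposition~\ref{p7} to a prounipotent target and the density of images of dense subgroups) are handled correctly, so no gap remains.
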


\begin{definition} Let $G$ be a prounipotent group and $N$ be its normal subgroup.
Let us say that $N$ is finitely related (as a normal subgroup) if there exists a set of elements
$\{g_1, ,..., g_n\}$ in $N$ such that the abstract subgroup of $N$ generated by all $G$-conjugations
of $g_i$ is Zariski dense. If $n$ is minimal then $n$ is called the minimal \emph{number of relators} of $N$.
\end{definition}

\begin{definition} \label{d7}
Let us call by a \emph{proper presentation} of a prounipotent group $G$ an exact sequence \eqref{eq1}
of prounipotent groups in which $F$ is free, and the homomorphism $H^1(G, k_a)\rightarrow H^1(F, k_a)$
is an isomorphism.
Let us say that $G$ is given by a finite number of relations if there exists such a sequence in which
$R$ is finitely related as a normal subgroup of $F$.
\end{definition}
\begin{definition} We shall say that a prounipotent group $G$ has $n$ \emph{relations} if in any proper
presentation \eqref{eq1} the normal subgroup $R$ is finitely related
as a normal subgroup of $F$, with the minimal number of relators equal to $n$.
\end{definition}

\begin{theorem} \cite[Th. 3.11]{LM1982} The prounipotent group $G$ has a finite number of relations
if and only if $H^2(G, k_a)$ is finite dimensional. If $G$ has a finite number of relations and if \eqref{eq1} is any proper
presentation of $G$, then $R$ is finitely related as a normal subgroup of $F$, and its minimal number of
relators is the dimension of $H^2(G, k_a).$
\end{theorem}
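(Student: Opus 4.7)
The plan is to apply the Hochschild--Serre spectral sequence (Proposition \ref{p3}) to the proper presentation $1 \to R \to F \to G \to 1$ with trivial coefficients $k_a$, translate the resulting cohomological statement into a condition on $R^{\mathrm{ab}}$, and then descend from $R^{\mathrm{ab}}$ back to $R$ by a Nakayama-type argument along the lower central series.

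First, since $F$ is a free prounipotent group, Lemma \ref{l1} gives $cd(F)\leq 1$, hence $H^2(F,k_a)=0$. The five-term exact sequence of the Hochschild--Serre spectral sequence then reads
\[
0 \to H^1(G,k_a) \to H^1(F,k_a) \to H^1(R,k_a)^G \to H^2(G,k_a) \to 0,
\]
and the properness of the presentation (Definition \ref{d7}) forces the first arrow to be an isomorphism. I therefore obtain a canonical identification $H^2(G,k_a)\cong H^1(R,k_a)^G$.

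Next I identify the right-hand side with the continuous dual of a coinvariant space. By Proposition \ref{p11}(1), $H^1(R,k_a)\cong \mathrm{Hom}(R,k_a)$; since $k_a$ is abelian, every such homomorphism factors through the abelianization $R^{\mathrm{ab}}$. The $G$-action is by conjugation, and because $G$ acts trivially on $k_a$, passing to $G$-invariants collapses $\mathrm{Hom}(R^{\mathrm{ab}},k_a)^G$ to $\mathrm{Hom}((R^{\mathrm{ab}})_G,k_a)$, where $(R^{\mathrm{ab}})_G$ denotes the coinvariants (quotient of $R^{\mathrm{ab}}$ by the Zariski closure of $\{(g-1)x\}$). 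By Proposition \ref{p11}(2) together with Theorem \ref{t04}, the dimension of this Hom-space equals the minimal number of generators of the abelian prounipotent group $(R^{\mathrm{ab}})_G$. Hence finite dimensionality of $H^2(G,k_a)$ is equivalent to finite generation of $(R^{\mathrm{ab}})_G$, with equality of the two dimensions.

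It remains to transfer this equality from $(R^{\mathrm{ab}})_G$ to $R$ itself. One direction is immediate: any finite set $\{g_1,\ldots,g_n\}\subset R$ whose $G$-conjugates are Zariski dense in $R$ projects to a generating set of $(R^{\mathrm{ab}})_G$, so the minimal number of normal generators of $R$ bounds $\dim H^2(G,k_a)$ from above. For the converse I would lift generators $\bar g_1,\ldots,\bar g_n$ of $(R^{\mathrm{ab}})_G$ to elements $g_i\in R$, and prove by induction along the lower central series $C_k(R)$ that the closed $G$-normal subgroup of $F$ generated by the $g_i$ surjects onto $R/C_k(R)$ for every $k$: each successive quotient $C_k(R)/C_{k+1}(R)$ is a prounipotent $G$-module whose $G$-coinvariants are controlled by $(R^{\mathrm{ab}})_G$, and iterated commutators in the $g_i$ supply the necessary generators modulo $C_{k+1}(R)$. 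Passing to the inverse limit then yields the desired equality of minimal numbers. The principal obstacle is precisely this Nakayama-type descent: one must work rigorously in the linearly compact prounipotent category and verify that Zariski closures, conjugation actions, and the commutator filtration behave compatibly so that generation modulo the commutator subgroup actually lifts to generation of $R$ as a $G$-normal subgroup. The spectral sequence input and the dualities, by contrast, are routine applications of the machinery recalled in Subsection \ref{s3.1}.
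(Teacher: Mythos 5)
The paper does not actually prove this statement; it is quoted from \cite[Th.~3.11]{LM1982}. Your cohomological reduction is nevertheless exactly the computation the paper itself performs later in the proof of Theorem~\ref{t02}: the five-term sequence with $H^2(F,k_a)=0$ (Lemma~\ref{l1}) together with properness gives $H^2(G,k_a)\cong H^1(R,k_a)^G$, and Proposition~\ref{p11} identifies $H^1(R,k_a)^G$ with the continuous dual of the coinvariants $(R^{\mathrm{ab}})_G=R/\overline{[R,F]}$. Up to that point your argument is correct and is the standard route.

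The gap is in the descent from $(R^{\mathrm{ab}})_G$ back to $R$, and the repair you sketch points in the wrong direction. Inducting along the lower central series $C_k(R)$ of $R$ itself cannot work: generation of $(R^{\mathrm{ab}})_G$ only controls $R$ modulo the closure of $[R,F]$, which is in general strictly larger than $[R,R]=C_2(R)$, so already the first step of your induction is not supplied by ``iterated commutators in the $g_i$'' --- one needs commutators with all of $F$, not words in the $g_i$ alone. The correct filtration is the descending $F$-central series $R\supseteq \overline{[R,F]}\supseteq \overline{[[R,F],F]}\supseteq\cdots$; more cleanly, argue by contradiction: if the closed normal subgroup $N$ of $F$ generated by the $g_i$ were properly contained in $R$, then $R/N$ would be a nontrivial prounipotent group with unipotent $G$-action, hence $((R/N)^{\mathrm{ab}})_G\neq 0$ --- the characteristic-zero analogue of Lemma~\ref{l100}, coming from the fact that a nonzero unipotent module has nonzero invariants (Kolchin) and hence, dually, nonzero coinvariants. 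But $((R/N)^{\mathrm{ab})_G}$ is exactly the quotient of $(R^{\mathrm{ab}})_G$ by the images of the $g_i$, contradicting the assumption that those images generate. That single fact about unipotent coinvariants is the missing ingredient; once it is in place, the inverse-limit bookkeeping you worry about is routine, since every module involved is an inverse limit of finite-dimensional unipotent ones.
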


\begin{proposition} \cite[Cor. 3.13]{LM1982} \label{p10} A prounipotent group $G$ has $n$ relations if and only if
$H^2(G, k_a)$ has dimension $n$.
\end{proposition}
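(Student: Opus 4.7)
The plan is to deduce this proposition directly from the preceding theorem (the cited \cite[Th. 3.11]{LM1982}) together with the definition of ``$G$ has $n$ relations.'' The preceding theorem already does the heavy lifting: it both characterizes when a proper presentation has finitely many relations (namely, precisely when $H^2(G,k_a)$ is finite dimensional) and pins down the minimal number of relators in \emph{any} proper presentation as $\dim H^2(G,k_a)$. The corollary merely repackages this as an ``if and only if'' on the integer $n$.

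For the forward implication, suppose $G$ has $n$ relations in the sense of Definition~\ref{d7}. By definition, there exists a proper presentation \eqref{eq1} in which $R$ is finitely related as a normal subgroup of $F$, with minimal number of relators equal to $n$; equivalently, every proper presentation has this property. Applying the previous theorem to this presentation, $H^2(G,k_a)$ is finite dimensional and $\dim_k H^2(G,k_a)$ equals the minimal number of relators, which is $n$.

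For the converse, assume $\dim_k H^2(G,k_a)=n<\infty$. Then the previous theorem guarantees that $G$ has a finite number of relations and that, in any proper presentation, $R$ is finitely related as a normal subgroup of $F$ with the minimal number of relators equal to $\dim_k H^2(G,k_a)=n$. By the definition of ``$G$ has $n$ relations,'' this is exactly the statement we need.

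The only subtlety worth flagging is ensuring that a proper presentation of $G$ actually exists in the converse direction, so that one can invoke the previous theorem. This is supplied by \cite[Prop.~2.8]{LM1982} (used earlier in the excerpt): one may choose an epimorphism $f\colon F_u(Z)\twoheadrightarrow G$ with $|Z|=\dim_k \mathrm{Hom}_{cts}(G,k_a)=\dim_k H^1(G,k_a)$ (by Theorem~\ref{t04}), which by Proposition~\ref{p11} ensures that the induced map $H^1(G,k_a)\to H^1(F,k_a)$ is an isomorphism, i.e.\ the presentation is proper in the sense of Definition~\ref{d7}. With this existence secured, the corollary is immediate and there is no real obstacle beyond bookkeeping.
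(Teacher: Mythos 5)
Your derivation is correct and is exactly the intended one: the paper gives no independent proof of this statement, presenting it as an immediate corollary of the preceding theorem (\cite[Th.~3.11]{LM1982}) combined with the definition of ``$G$ has $n$ relations,'' which is precisely what you do. Your extra remark on securing the existence of a proper presentation via \cite[Prop.~2.8]{LM1982} is a reasonable piece of bookkeeping that the paper leaves implicit.
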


\begin{proposition} \cite[Theorem 3.14]{LM1982} \label{p18} Let $G$ be a prounipotent group, and assume that for some $n > 1$, $H^n(G, k_a)$
has dimension one, then $cd(G) = n.$
\end{proposition}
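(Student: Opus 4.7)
Since $H^n(G, k_a)$ is nonzero, Proposition \ref{p9} gives $cd(G) \geq n$ immediately. The task is therefore to prove $cd(G) \leq n$, which, since $k_a$ is the only simple $G$-module in the prounipotent setting (so the criterion stated just before the Proposition applies), is equivalent to $H^{n+1}(G, k_a) = 0$. My plan is to analyze the minimal injective resolution of $k_a$ to translate this into a surjectivity statement, and then reduce to the case of finite-dimensional nilpotent Lie algebras.

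The first step combines Proposition \ref{p8} (which identifies the injective envelope $\mathcal{E}_0(V) = V^G \otimes \mathcal{O}(G)$) with the observation used in Proposition \ref{p9} that minimal-resolution differentials vanish on $G$-invariants; a short induction then yields $\mathcal{E}_i(k_a) \cong H^i(G, k_a) \otimes \mathcal{O}(G)$ for every $i \geq 0$. In particular, the hypothesis gives $\mathcal{E}_n(k_a) \cong \mathcal{O}(G)$. Setting $B_n = \mathrm{im}(d_{n-1}) \subset \mathcal{O}(G)$, one obtains $H^{n+1}(G, k_a) = \mathcal{E}_{n+1}(k_a)^G = (\mathcal{O}(G)/B_n)^G$. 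Since every nonzero $G$-module over a prounipotent group in characteristic zero has nonzero invariants (a Lie--Kolchin fixed point appears on any finite-dimensional subquotient), this cohomology vanishes if and only if $B_n = \mathcal{O}(G)$, i.e., the differential $d_{n-1} \colon H^{n-1}(G, k_a) \otimes \mathcal{O}(G) \to \mathcal{O}(G)$ is surjective.

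To establish this surjectivity I would pass to the pronilpotent Lie algebra picture. Writing $G = \varprojlim G_\alpha$ with $G_\alpha$ finite-dimensional unipotent algebraic, the colimit identity recalled in subsection \ref{s3.1} combined with the Chevalley--Eilenberg identification gives $H^*(G, k_a) \cong \varinjlim H^*(\mathfrak{g}_\alpha, k)$, where each $\mathfrak{g}_\alpha = \mathrm{Lie}(G_\alpha)$ is finite-dimensional nilpotent. The decisive input is Dixmier's theorem on Betti numbers of nilpotent Lie algebras: for such $\mathfrak{g}$ of dimension $d$, $\dim H^j(\mathfrak{g}, k) \geq 2$ for every $0 < j < d$, with equality to $1$ only at $j = 0$ and $j = d$. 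Given $\dim H^n(G, k_a) = 1$ and $n > 1$, this should force a cofinal system of $\alpha$'s with $\dim \mathfrak{g}_\alpha = n$ exactly; for such $\alpha$ the Chevalley--Eilenberg complex vanishes above degree $n$, so $H^{n+1}(G_\alpha, k_a) = 0$, and passing to the colimit gives $H^{n+1}(G, k_a) = 0$.

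The main obstacle is extracting this cofinality from the hypothesis: inflation maps $H^j(G_\alpha, k_a) \to H^j(G_{\alpha'}, k_a)$ can have nontrivial kernel for $j \geq 2$, so one must argue carefully that the one-dimensional colimit is incompatible with having all sufficiently large $\mathfrak{g}_\alpha$ of dimension strictly greater than $n$. I expect this to hinge on combining the Dixmier lower bound on middle Betti numbers with a direct-limit argument that tracks how the extra classes in $H^n(\mathfrak{g}_\alpha, k)$ forced by Dixmier are killed further along the tower; an alternative route would be inductive, using a central $k_a \subset G$ and the Hochschild--Serre spectral sequence of $1 \to k_a \to G \to Q \to 1$ to bootstrap from the base case $n = 2$, which is the one-relator situation treated by the known result of \cite{LM1982}.
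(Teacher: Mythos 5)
Your reduction is sound and runs parallel to the paper's up to the decisive step: via Propositions \ref{p8} and \ref{p9} you identify $\mathcal{E}_i(k_a)\cong H^i(G,k_a)\otimes\mathcal{O}(G)$, so $\mathcal{E}_n\cong\mathcal{O}(G)$, and you correctly observe that $cd(G)=n$ is equivalent to surjectivity of $d_{n-1}\colon\mathcal{E}_{n-1}\to\mathcal{O}(G)$. But you do not establish that surjectivity, and you say so yourself. The paper closes exactly this gap with one structural fact you are missing: for a prounipotent group over a field of characteristic zero, \emph{every nonzero $G$-module endomorphism of $\mathcal{O}(G)$ is surjective} (\cite[Theorem 5.2]{LM1982} over $\overline{k}$, transported to $k$ via $\mathcal{O}(G_{\overline{k}})\cong\mathcal{O}(G_k)\otimes_k\overline{k}$). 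Since minimality of the resolution forces $d_{n-1}\neq 0$ and its source is a direct sum of copies of $\mathcal{O}(G)$, the map is nonzero on some summand, hence onto on that summand, hence onto; therefore $\mathcal{E}_{n+1}=0$ and $cd(G)=n$. Without this lemma (or a substitute) your argument stops at the reformulation.

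The substitute you propose does not work as sketched, because it rests on a false intermediate claim: $\dim H^n(G,k_a)=1$ does \emph{not} force a cofinal system of quotients with $\dim\mathfrak{g}_\alpha=n$. A one-relator prounipotent group on $d\geq 2$ generators (for instance the group obtained from a surface-group presentation) has $\dim H^2(G,k_a)=1$ yet is infinite-dimensional, so every sufficiently large $\mathfrak{g}_\alpha$ has dimension greater than $2$ and, by Dixmier, carries extra classes in $H^2(\mathfrak{g}_\alpha,k)$; these are killed by inflation in the colimit, which is precisely the phenomenon you flag but do not control. Your alternative inductive route is also circular at the base: the statement that one-relator prounipotent groups have $cd=2$ is in \cite{LM1982} a consequence of the very theorem you are proving, not an independent input, and it is in any case unclear how the hypothesis on $H^n(G,k_a)$ descends to the quotient $Q$ in the central extension.
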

\begin{proof}
Let $\mathcal{E}_i, i \in \mathbb{N}$ be a minimal injective $G$-module resolution of $k_a$, then Proposition
\ref{p9} yields $H^i(G, k_a) \cong \mathcal{E}_i^G$ and by Proposition \ref{p8} $\mathcal{E}_i\cong H^i(G, k_a)\otimes \mathcal{O}(G)$.
In particular, $\mathcal{E}_n =\mathcal{O}(G).$ Since $\mathcal{E}_n\neq 0$, then
$d_{n-1} : \mathcal{E}_{n-1}\rightarrow \mathcal{E}_n$ is a nonzero map.
Any nonzero $G$-endomorphism of $\mathcal{O}(G)$ is onto, since this is true for prounipotent groups obtained by extending
scalars to the algebraic closure $\overline{k}$ \cite[Theorem ~5.2]{LM1982} and $\mathcal{O}(G_{\overline{k}})\cong\mathcal{O}(G_k)\otimes_k \overline{k}$.
Therefore $d_{n-1}$ is an epimorphism, and hence $\mathcal{E}_{n+1}=0$ and $cd(G) = n.$

\end{proof}

\section{Proof of Theorem \ref{t02}}\label{s2.1}

Consider a proper
($dim_{\mathbb{F}_p}H^1(G,\mathbb{F}_p)=dim_{\mathbb{F}_p}H^1(F,\mathbb{F}_p)$)
presentation \eqref{eq1} of a pro-$p$-group $G$ with one defining relation. A proper presentation of $G$
can give rise to a non-proper (Definition \ref{d7}, $dim_{\mathbb{Q}_p}H^1(G_u,\mathbb{Q}_p)<dim_{\mathbb{Q}_p}H^1(F_u,\mathbb{Q}_p),$)
presentation of the prounipotent group $G_u=F_u(X)/R_u,$ where $R_u=(r)_{F_u(X)}$ is the Zariski
closure of the normal subgroup abstractly generated by the element $r.$
Non-properness of the presentation is equivalent to the statement that the element $r$
is a generator of the free prounipotent group $F_u=F_u(X)(\mathbb{Q}_p)$
(here we have identified the prounipotent group $F_u(X)$ with its group of $\mathbb{Q}_p$-points,
which is correct by the arguments from Subsection \ref{s1.2}).
But Theorem \ref{t04} (see also \cite[p.~85--86]{LM1982})
implies that this is equivalent to non-triviality of the image $\phi(r)$ of the relation $r \in F_p \subset F_u$
under the homomorphism $\phi: F_u \rightarrow F_u/[F_u,F_u]$.
\begin{proof}[of Theorem \ref{t02}]

1) Consider the degenerate case. Without loss of generality we can assume that the prounipotent presentation has
the following form:
$$1\rightarrow R_u=(z)_{F_u(X\cup \{z\})}\rightarrow F_u(X\cup \{z\})\xrightarrow{d_0} F_u(X)\rightarrow 1,$$
where $X$ is the free basis of $F_u$. Consider the 2-reduced simplicial group
$$\xymatrix{
{F_u(X\cup \{z\})} \ar@<0ex>[r]^{d_0}\ar@<-2ex>[r]^{d_1} & F_u(X) \ar@<-2ex>[l]_{s_0} \ar[r] & G_u\cong F_u(X),}$$
here $d_0, d_1, s_0$ are defined on $x \in X, z$ by the identities:
$d_0(x)=x,  d_0(z)=1, d_1(x)=x,  d_1(z)=1, s_0(x)=x.$

It is clear that $R_u\cong kerd_0, F_u(X\cup \{z\})\cong kerd_0\leftthreetimes F_u(X).$
Finally, we can apply the arguments from \cite[Proposition~2]{Mikh2016}, showing that
$(kerd_0,F_u(X),d_1|_{kerd_0}=1)$ is a free prounipotent pre-crossed module.
It remains to note that $kerd_1= kerd_0,$ and hence
$\overline{C}_u=kerd_0/[kerd_0,kerd_0kerd_1]=kerd_0/[kerd_0,kerd_0]\cong \overline{R}_u.$
It remains to use \cite[Corollary 3]{Mikh2016}, which implies the required isomorphism
of topological $\mathcal{O}(G_u)^*$-modules $\overline{C}_u(\mathbb{Q}_p)\cong \mathcal{O}(G_u)^*.$

2) Now consider the case in which the pro-$p$-presentation \eqref{2} yields a proper prounipotent
presentation \eqref{5} of the prounipotent group $G_u.$ Now $r \in F_p \subset F_u:=F_u(X)(\mathbb{Q}_p)$
is not a generator in $F_u$. Let us perform the proof by a series of reductions.

First, note that the proof of isomorphism of left topological $\mathcal{O}(G_u)^*$-modules
$\overline{R_u}(\mathbb{Q}_p)\cong \mathcal{O}(G_u)^*$, by Pontryagin duality \cite[3]{Mikh2016}, is equivalent
to the proof of the isomorphism of $\mathcal{O}(G_u)$-comodules
$Hom_{cts}(\overline{R_u}(\mathbb{Q}_p),k)\cong\mathcal{O}(G_u).$

Proposition \ref{p11} states that there is an isomorphism of left
$\mathcal{O}(G_u)$-comodules $$H^1(R_u,k_a)\cong Hom(\overline{R_u},k_a)\cong Hom_{cts}(log(\overline{R_u}(\mathbb{Q}_p)),k)=\overline{R_u}(\mathbb{Q}_p)^{\vee}.$$
Thus, we need to prove the isomorphism of left $\mathcal{O}(G_u)$-comodules
$H^1(R_u,k_a)\cong \mathcal{O}(G_u).$

Let us study the minimal injective $\mathcal{O}(G_u)$- resolution of the trivial left
$\mathcal{O}(G_u)$-comodule $k_a.$ Proposition \ref{p9} implies that since the cohomological dimension
of $F_u$ equals one (Lemma \ref{l1}), then the minimal $\mathcal{O}(F_u)$- resolution of $k_a$ will have the form
$$0\rightarrow k_a\rightarrow \mathcal{O}(F_u)\rightarrow \mathcal{O}(F_u)^{dim_kH^1(F_u,k_a)}\rightarrow 0.$$

\cite[I, Proposition 4.12, Proposition 3.3]{Jan} implies that we can consider this resolution as
an injective resolution consisting of left $\mathcal{O}(R_u)$- comodules. Applying the functor of
$R_u$-fixed points,
we obtain an exact sequence (since $R_u$-fixed points of $\mathcal{O}(F_u)$ coincide with $\mathcal{O}(G_u)$
\cite[16.3]{Wat})
$$0\rightarrow k_a\rightarrow \mathcal{O}(G_u)\rightarrow \mathcal{O}(G_u)^{dim_kH^1(F_u,k_a)}\rightarrow
H^1(R_u,k_a)\rightarrow 0.$$
Taking into account that the presentation is proper, in small dimensions the Grothendieck
spectral sequence (Proposition \ref{p3}) is written in the form
$$1\rightarrow H^1(G_u,k_a)\rightarrow H^1(F_u,k_a)\rightarrow H^1(R_u,k_a)^F\rightarrow
H^2(G_u,k_a)\rightarrow H^2(F_u,k_a)=1,$$
which yields an isomorphism $H^1(R_u,k_a)^G \cong H^2(G_u,k_a).$  Hence, Proposition \ref{p8} shows that the injective
envelope of $H^1(R_u,k_a)$ coincides with $\mathcal{O}(G_u)^{dim_kH^2(G_u,k_a)}$.
Now, considering the composition of the map
$\mathcal{O}(G_u)^{dim_kH^1(G_u,k_a)}\rightarrow H^1(R_u,k_a)$
with the inclusion of $H^1(R_u,k_a)$ into its injective envelope, we obtain the beginning of the
minimal injective $\mathcal{O}(G_u)$-resolution
$$0\rightarrow k_a\rightarrow \mathcal{O}(G_u)\rightarrow \mathcal{O}(G_u)^{dim_kH^1(G_u,k_a)}\rightarrow
\mathcal{O}(G_u)^{H^2(G_u,k_a)}$$
of the trivial comodule $k_a$. In particular, one has an isomorphism of left $\mathcal{O}(G_u)$- comodules
$H^1(R_u,k_a)\cong im\{\mathcal{O}(G_u)^{dim_kH^1(G_u,k_a)}\rightarrow \mathcal{O}(G_u)^{H^2(G_u,k_a)}\}.$
Therefore conditions a) $H^1(R_u,k_a)$ is $G$-injective and b) $cd(G_u)= 2$ are equivalent.

It remains to prove that a prounipotent group given by a proper presentation with one relation has cohomological
dimension equal to two,  but this is a particular case of Proposition \ref{p8}, since
Proposition \ref{p10} implies the equality $dim_kH^2(G_u,k_a)=1.$

\end{proof}

\section{Identities in free pro-$p$-groups}\label{s2.4}

\begin{definition} By an admissible ring of coefficients one calls a commutative complete local
$k$-algebra $R$ without divisors of zero and with the maximal ideal $m = m_R$
such that $R/m = k$ and $l=dim_k(m/m^2)<\infty$.
\end{definition}

Note that the decomposition $R=\varprojlim R/m^i$ enables one to consider $$GL_n(R) = \varprojlim GL_n(R/m^i)$$
as the projective limit of linear algebraic groups $GL_n(R/m^i)$, and hence as a $k$-affine group scheme
\cite{Wat}. One has the Levi decomposition $GL_n(R/m^i)\cong (I+M_n(m/m^i))\leftthreetimes GL_n(k)$
into the semidirect product of the linear algebraic (reductive) group $GL_n(k)$ and the unipotent group
$I+M_n(m/m^i)$. 

Let $K_i=ker\{GL_n(R) \rightarrow GL_n(R/m^i)\},$ then $K_i\cong I + M_n(m^i)$ and
$$K_1/K_i \cong I + M_n(m/m^i)\cong ker\{ GL_n(R/m^i)\rightarrow GL_n(k)\}.$$
Since $I + M_n(m/m^i)$ are unipotent linear algebraic groups, then
$K_1 \cong \varprojlim (K_1/K_i)$ is a prounipotent group, and in particular one has the Levi decomposition
$GL_n(R) = K_1\leftthreetimes GL_n(k).$
\begin{definition} Let $R$ be an admissible ring of coefficients, then by an $R$- admissible representation
of a prounipotent group $U$ one calls a homomorphism of affine group schemes $\rho: U \rightarrow GL_n(R)$.
\end{definition}


Let $\rho : U \rightarrow GL_n(R) \cong K_1\leftthreetimes GL_n(k))$ be an admissible representation.
Then $\rho^{-1}K_1$ is a closed normal prounipotent subgroup of $U$ of finite codimension, such that
the quotient group $\widetilde{U} = U/\rho^{-1}K_1$ has a faithful representation in $GL_n(k).$
Since $U$ is unipotent,
this means that its image is conjugate to a subgroup consisting of upper triangular matrices,
hence $U$ has the rank no more than $n$. Therefore, an $(n + 1)$-multiple commutator from $U$ belongs to $\rho^{-1}K_1$,
and hence from the viewpoint of identities the study of representations into $K_1$ and into $GL_n(R)$ are equivalent.
Hence below in the existence questions of identities we shall restrict ourselves by representations into $K_1.$

\begin{definition} \label{d11}
By an identity of $d\geq2$ variables with values in a prounipotent group $G$ one calls an element
$u$ of the free prounipotent group $F = F(x_1, ... , x_d)$ with $d$ generators, which lies in the kernel
of any homomorphism $f: F \rightarrow G.$ The set of all identities of $d$ variables with values in
$G$ forms a closed normal subgroup $I(d, G)$ in $F$. The set of identities of $d$ variables with values in
a set of prounipotent groups $\mathcal{G}= G_{\alpha}$ is the normal prounipotent subgroup
$I(d,\mathcal{G}) =\cap_{\mathcal{G}} I(d,G_{\alpha})$ in $F$. If $\mathcal{G}=\{ GL_n(R)\mid  R$ is admissible$\}$,
then $I(d,\mathcal{G} )$ is called the group of identities in $n \times n$ matrices, which is denoted $I(d ,n)$.
If $\mathcal{G} = \{I + M_n(m_R)\mid R$ is admissible$\},$ then we say that this is the group of
restricted identities for $n \times n$ matrices, denoted by $I^r(d, n).$ \label{d11}
\end{definition}

\begin{definition} \label{d12}
Let $d\geq2$ and $n$ be natural numbers. The prounipotent group $UG(n, d)$ of $d,$ $n\times n$ general matrices
is the closed subgroup in $I+M_n(m_S)$ generated by $X_p, 1 \leqslant p \leqslant d,$ where
$S = k[[x^{(p)}_{ij} | 1\leqslant i,j\leqslant n, 1\leqslant p\leqslant d]]$
is the ring of formal power series of $dn^2$ commuting variables, and $X_p=I+(x_{ij}^{(p)}).$
\end{definition}

There is a natural homomorphism $F(x_1, ... , x_d)\rightarrow UG(n, d)$ given on generators by the rule
$x_i\mapsto X_i$, whose kernel contains $I^r(d, n).$ In \cite{Mag} one proved the following Proposition, which is
an analog of earlier results of Amitsur (see for example \cite[Proposition 19]{For} and further references there).

\begin{proposition} \label{p18}
The natural homomorphism $F(x_1, ... , x_d)\rightarrow UG(n, d)$ induces the isomorphism of prounipotent groups
$F(x_1, ... , x_d)/I^r(d,n)\rightarrow UG(n, d).$
\end{proposition}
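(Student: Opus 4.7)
The plan is to establish the result by the standard strategy of showing that the kernel of the natural homomorphism $\phi:F(x_1,\ldots,x_d)\to UG(n,d)$, $\phi(x_i)=X_i$, equals $I^r(d,n)$; surjectivity in the category of prounipotent groups will follow at once from the defining property of $UG(n,d)$ as the Zariski closure of the subgroup generated by $X_1,\ldots,X_d$.

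First I would observe that the ring $S=k[[x_{ij}^{(p)}\mid 1\leq i,j\leq n,\ 1\leq p\leq d]]$ from Definition~\ref{d12} is itself an admissible ring of coefficients in the sense of the earlier definition: it is a commutative complete local $k$-algebra without zero divisors, with residue field $k$ and with $\dim_k(m_S/m_S^2)=dn^2<\infty$. Hence $\phi$, viewed as a homomorphism $F\to I+M_n(m_S)$, is a particular restricted admissible representation, so by Definition~\ref{d11} its kernel contains $I^r(d,n)$.

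For the opposite inclusion $\ker\phi\subseteq I^r(d,n)$, I would exploit the universal property of the formal power series ring $S$. Given any admissible ring $R$ and any restricted admissible representation $\rho:F\to I+M_n(m_R)$, each entry of $\rho(x_p)-I$ lies in $m_R$, so the assignment $x_{ij}^{(p)}\mapsto(\rho(x_p)-I)_{ij}$ extends uniquely to a continuous local $k$-algebra homomorphism $\psi:S\to R$. Applying $\psi$ entrywise produces a morphism of prounipotent groups $\Psi:I+M_n(m_S)\to I+M_n(m_R)$ satisfying $\Psi(X_p)=\rho(x_p)$ for every $p$. Since a morphism of prounipotent groups out of the free object $F$ is determined by its values on the generators, we get $\Psi\circ\phi=\rho$; hence every $w\in\ker\phi$ satisfies $\rho(w)=\Psi(I)=I$. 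As $\rho$ was an arbitrary restricted admissible representation, $w\in I^r(d,n)$.

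The main subtlety I anticipate is ensuring that the resulting bijection $F/I^r(d,n)\to UG(n,d)$ is really an isomorphism in the category of prounipotent groups, not merely of their groups of $k$-points. This should follow from the fact that $\phi$ is a surjection of affine group schemes (its image is Zariski dense in $UG(n,d)$ by the very definition of the latter) together with the standard identification of the quotient of a prounipotent group by a closed normal subgroup. The heart of the argument is the universal factorization $\rho=\Psi\circ\phi$ in the second step, which reduces the existence of all admissible representations to the single generic one; everything else is formal bookkeeping inside the prounipotent category.
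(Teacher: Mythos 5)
Your argument is correct. Note first that the paper itself gives no proof of this proposition: it is quoted from Magid's paper \cite{Mag} (the sentence preceding the statement only records the easy inclusion $I^r(d,n)\subseteq\ker\phi$), so there is no in-text proof to compare against; what you have written is the standard Amitsur-style ``generic matrices'' argument, which is exactly the mechanism behind Magid's result. Your three steps are all sound: $S=k[[x_{ij}^{(p)}]]$ is indeed admissible (a complete local domain with residue field $k$ and $\dim_k(m_S/m_S^2)=dn^2$), so $\phi$ is itself a restricted admissible representation and $I^r(d,n)\subseteq\ker\phi$; the universal factorization $\rho=\Psi\circ\phi$ works because $\psi(m_S)\subseteq m_R$ forces $\psi(m_S^i)\subseteq m_R^i$, so $\Psi$ is a genuine morphism of the prounipotent groups $I+M_n(m_S)\to I+M_n(m_R)$, and agreement with $\rho$ on the generators suffices by the universal property of the free prounipotent group (Proposition~\ref{p7}); and surjectivity follows since images of prounipotent homomorphisms are closed (as recalled in Subsection~\ref{s1.2}), so $\phi(F)$ is the Zariski closure of $\langle X_1,\dots,X_d\rangle$, i.e.\ $UG(n,d)$ by Definition~\ref{d12}. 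The passage from the resulting bijection on $k$-points to an isomorphism of affine group schemes is, as you say, the first isomorphism theorem for quotients by closed normal subgroups. The only cosmetic remark: you could state explicitly that both $\ker\phi$ and $I^r(d,n)$ are compared as closed subgroup schemes, which your identity $\rho=\Psi\circ\phi$ of scheme morphisms already delivers.
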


The following Theorem on non-triviality of identities will be needed below for constructing conjurings.
Recall that in free discrete groups there are no identities, they are linear.

\begin{proposition} \cite{Mag} \label{t6}
If $d,n \geqslant 2,$ then $I^r(d, n)\neq  \{e\}$.
\end{proposition}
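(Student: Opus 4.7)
My plan is to show that the canonical epimorphism $\pi\colon F(x_1, \ldots, x_d) \twoheadrightarrow UG(n, d)$ of Proposition~\ref{p18} is not an isomorphism; since its kernel is $I^r(d, n)$, this proves the claim. Both groups have ``rank $d$'' as prounipotent groups (by Theorem~\ref{t04}): for $UG(n, d)$ this follows from the linear independence of the first-order parts $A_i = (x^{(i)}_{jk}) \in M_n(m_S/m_S^2)$ of $\log X_i$ in the abelianization, since their entries involve disjoint batches of generators of $S$. If $\pi$ were an isomorphism, then $UG(n, d)$ would be free of rank $d$, hence by Lemma~\ref{l1} would have cohomological dimension one; the task becomes to refute this by exhibiting any nonzero element of $\ker\pi$.

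I would pass to the Lie algebra via the Malcev correspondence of Subsection~\ref{s1.2}. Let $\mathfrak{ug}(n,d) \subset M_n(m_S)$ be the closed Lie subalgebra (commutator bracket) topologically generated by $\log X_1, \ldots, \log X_d$. It suffices to produce a nonzero element of the kernel of the canonical surjection $\widehat{\mathfrak{L}}(d) \twoheadrightarrow \mathfrak{ug}(n,d)$ from the free pronilpotent Lie algebra on $d$ generators; via $\exp$ and the Baker--Campbell--Hausdorff formula it will correspond to a nonzero element of $\ker\pi$. The key computation is a dimension count on the associated graded for the $m_S$-adic filtration: a Lie polynomial of degree $r$ in $\log X_i \in M_n(m_S)$ lies in $M_n(m_S^r)$, so modulo $m_S^{r+1}$ we get a map
\[
\mathrm{FL}(d)_r \longrightarrow M_n(k) \otimes \mathrm{Sym}^r(V), \qquad V = m_S/m_S^2, \ \dim V = dn^2.
\]
The target has dimension $n^2\binom{r + dn^2 - 1}{r} = O(r^{dn^2 - 1})$, polynomial in $r$, while by Witt's formula $\dim \mathrm{FL}(d)_r \sim d^r/r$ is exponential for $d \geq 2$. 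For $r$ sufficiently large this forces a nonzero $L_0 \in \mathrm{FL}(d)_r$ with $L_0(\log X_1, \ldots, \log X_d) \in M_n(m_S^{r+1})$.

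The main obstacle is promoting this ``relation modulo $m_S^{r+1}$'' to a genuine relation. I would use a Hensel-style iterative correction: if a Lie polynomial $L$ satisfies $L(\log X_i) \in M_n(m_S^{r+s})$, its leading term modulo $m_S^{r+s+1}$ automatically lies in $\mathrm{gr}(\mathfrak{ug})_{r+s}$ (because $L(\log X_i) \in \mathfrak{ug}(n,d)$), and by construction this graded piece is the image of $\mathrm{FL}(d)_{r+s}$ on the generic matrices $A_i$. Subtracting off a lift $L' \in \mathrm{FL}(d)_{r+s}$ raises the valuation of $L$ by one. Iterating from $L_0$ yields a Cauchy sequence in the complete topology of $\widehat{\mathfrak{L}}(d)$ converging to some $L_\infty$ with nonzero leading term $L_0$ and zero image in $\mathfrak{ug}(n,d)$. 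Applying $\exp$ and $\mathrm{BCH}$ returns the desired nonzero element of $F(x_1, \ldots, x_d)$ lying in $I^r(d,n)$.
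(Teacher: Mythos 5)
The paper offers no proof of this statement --- it is quoted from \cite{Mag} --- so I evaluate your argument on its own terms. Your overall route is the right one and is in the spirit of Magid's counting argument: reduce via Proposition~\ref{p18} to showing that the kernel of $F_u(x_1,\dots,x_d)\twoheadrightarrow UG(n,d)$ is nontrivial, pass to pronilpotent Lie algebras, and compare the exponential growth $\dim\mathrm{FL}(d)_r\sim d^r/r$ (Witt) with the polynomial bound $n^2\binom{r+dn^2-1}{dn^2-1}$ on the target. This correctly produces a nonzero homogeneous $L_0\in\mathrm{FL}(d)_r$ with $L_0(A_1,\dots,A_d)=0$.

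The one genuine soft spot is the Hensel step. You assert that the graded piece $\mathrm{gr}_{j}(\mathfrak{ug}(n,d))$ of the induced $m_S$-adic filtration is ``by construction'' the image of $\mathrm{FL}(d)_{j}$ evaluated on the $A_i$. That is not by construction: $\mathfrak{ug}(n,d)$ is generated by the inhomogeneous elements $\log X_i=A_i-\tfrac12A_i^2+\cdots$, and in general the associated graded of a subalgebra of a filtered algebra is strictly larger than the subalgebra generated by the leading terms of its generators; the cross terms involving $A_i^2$ are not a priori Lie polynomials in the $A_i$. The claim is nevertheless true here for a reason you should make explicit: a homogeneous Lie polynomial $\ell$ vanishing on the \emph{generic} matrices $A_i$ is a polynomial identity of $M_n$ over every commutative $k$-algebra (specialize the variables $x^{(p)}_{ij}$), hence vanishes on \emph{any} matrix substitution --- in particular $\ell(\log X_1,\dots,\log X_d)=0$ exactly, not merely to higher order. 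Applied inductively to the homogeneous components of a Lie series $L$ with $\Phi(L)\in M_n(m_S^{j})$, this shows the leading term of $\Phi(L)$ is exactly $L_j(A_i)$ with $L_j\in\mathrm{FL}(d)_j$, which justifies your correction step. But the same observation makes the whole iteration unnecessary: your $L_0$ already satisfies $L_0(M_1,\dots,M_d)=0$ for all $M_i\in M_n(m_R)$ and every admissible $R$, so $\exp\bigl(L_0(\log x_1,\dots,\log x_d)\bigr)$ is, via the Campbell--Hausdorff correspondence, already a nontrivial element of $I^r(d,n)$. With that one lemma supplied (or the detour removed), the proof is complete.
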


\subsection{Proof of Theorem \ref{p02}}
\begin{proof} Since $GL_n(\mathbb{F}_p)$ is a finite group, the set
$M(n,d)$ of homomorphisms $\psi:F_p(d)\rightarrow GL_n(\mathbb{F}_p)$ is also finite.
Then $\mathbb{M}(n,d)=\cap_{\psi\in M(n,d)}ker\psi$ is a normal subgroup of finite index
in the free pro-$p$-group $F_p(d)$, and any element $g\in\mathbb{M}(n,d)$ is an identity, in the sense
that in any $n$-dimensional representation of the pro-$p$-group $F_p(d)$ over the field $\mathbb{F}_p$
the action of $g$ is trivial.

In free prounipotent groups there are identities (Proposition \ref{t6}). Also
$I^r(d,n)\lhd F_u(d):=F_u(d)(\mathbb{Q}_p)$ can be described (Proposition \ref{p18})
as the kernel of the homomorphism $\gamma_n$ onto $GU(n,d)$. Denote by $\widetilde{\gamma}_n$ the restriction
of $\gamma_n$
to the Zariski dense free pro-$p$-subgroup $F_p(d)\subseteq F_u(d)$.
$$\xymatrix{
  I^r(d,n)  \ar[r] & F_u(d)  \ar[r]^{\gamma_n} & GU(n,d) \\
  Z(d,n) \ar@{_{(}->}[u] \ar[r] & F_p(d) \ar@{_{(}->}[u] \ar[r]^{\widetilde{\gamma}_n} & GU^p(n,d) \ar@{_{(}->}[u] }$$
Assume the contrary, i.~e. that $ker(\widetilde{\gamma}_n)=1,$ then $\widetilde{\gamma}_n$ is an isomorphism.
It is clear that any central filtration $\widetilde{W}$ in $F_u$ induces a central filtration $W$ on $F_p(d).$
On the other hand, by \cite[Lemma 7.5]{HM2003} one has the isomorphism of graded quotients, where
$\widetilde{\widetilde{W}}=\gamma_n(\widetilde{W})$ is a central filtration in $GU(n,d)$:
$$Gr^{\widetilde{W}}_mF_u(d)\cong Gr^W_mF_p(d)\otimes_{\mathbb{Z}_p}\mathbb{Q}_p\cong Gr^{\widehat{W}}_mGU^p(n,d)
\otimes_{\mathbb{Z}_p}\mathbb{Q}_p\cong Gr^{\widetilde{\widetilde{W}}}_mGU(n,d).$$
Thus, $\gamma_n$ provides an isomorphism of graded quotients
$Gr^{\widetilde{W}}_mF_u(d)\cong Gr^{\widetilde{\widetilde{W}}}_mGU(n,d),$ hence $ker(\gamma_n)=1$ ($\cap \widetilde{W}_n=1$).
But this is not so by Proposition \ref{t6}
($I^r(d,n)\neq 1).$ Therefore $Z(d,n)\neq 1$ and since $\mathbb{M}(n,d)$ has a finite index in $F_p(d),$ then $\mathcal{Z}(d,n)=Z(d,n)\cap\mathbb{M}(n,d)\neq 1.$ Let us call nontrivial elements of $\mathcal{Z}(d,n)$
by \emph{conjurings}.

  Assume that $r=w^{p^l}$. Since $d\geq 2 $, for any $r$ one can choose an conjuring
  $z_n\in \mathcal{Z}(d,n)$, not lying in the centralizer of $w^{p^l}$ (since $I^r(d,n)\lhd F_u(d)$ and since
  $d\geq2$, it is not cyclic, hence non-Abelian, and therefore for any $r\in F_u$ there exists an conjuring
  $z_n \in Z(d,n),$  not lying in the centralizer of $r$ ($z_n \notin Z(r)$). Now the ``Fermat equality''
  $$w^{p^l}\cdot z_n^p= u^p$$ leads to a contradiction. Indeed, according to the pro-$p$-analog of the
  Lyndon--Sch\"{u}tzenberger theorem \cite[Theorem 1]{Mel2}, the rank of the free pro-$p$-subgroup generated by
  $\langle w^{p^{l-1}}, z_n, u\rangle$ equals one, and hence $z_n$ and $w^{p^n}$ commute,
  which contradicts to the choice of $z_n$, and therefore $w^{p^l}\cdot z_n^p$ is not a $p$-th power.

  Property b) follows by construction of $z_n$, since $z_n\in \mathcal{Z}(d,n)$.
\end{proof}

\begin{acknowledgement}
The author expresses gratitude to A.S. Mishchenko and V.M. Manuilov for the constant interest and valuable discussions during the work on the article.

\end{acknowledgement}

\end{document}